\definecolor{DarkGreen}{rgb}{0.1,0.5,0.1}
\definecolor{DarkRed}{rgb}{0.5,0.1,0.1}
\definecolor{DarkBlue}{rgb}{0.1,0.1,0.5}
\numberwithin{equation}{section}
\newtheorem{theorem}{Theorem}[section]
\newtheorem{corollary}[theorem]{Corollary}%[section]
\newtheorem{lemma}[theorem]{Lemma}%[section]
\newtheorem{definition}[theorem]{Definition}
\def \trace {\operatorname*{trace}}
\def \zero {\mathbf{0}}
\def \R {\mathbb{R}}
\def \W {\mathbf{W}}
\def \X {\mathbf{X}}
\def \A {\mathcal{A}}
\def \H {\mathcal{H}}
\def \Id {\mathcal{I}}
\def \C {\mathbb{C}}
\def \Z {\mathbf{Z}}
\def \Y {\mathbf{Y}}
\def \y {\bm{y}}
\def \e {\bm{e}}
\def \z {\bm{z}}
\def \x {\bm{x}}
\DeclareMathOperator*{\argmin}{argmin}
\title{Iterative Hard Thresholding for Low CP-rank Tensor Models}
\author{Rachel Grotheer, Shuang Li, Anna Ma, Deanna Needell, Jing Qin}
\date{\today}
\def \X				{\boldsymbol{X}}
\def \Y				{\boldsymbol{Y}}
\begin{document}
\maketitle
\begin{abstract}
Recovery of low-rank matrices from a small number of linear measurements is now well-known to be possible under various model assumptions on the measurements. Such results demonstrate robustness and are backed with provable theoretical guarantees. However, extensions to tensor recovery have only recently began to be studied and developed, despite an abundance of practical tensor applications. Recently, a tensor variant of the Iterative Hard Thresholding method was proposed and theoretical results were obtained that guarantee exact recovery of tensors with low Tucker rank. In this paper, we utilize the same tensor version of the Restricted Isometry Property (RIP) to extend these results for tensors with low CANDECOMP/PARAFAC (CP) rank. In doing so, we leverage recent results on efficient approximations of CP decompositions that remove the need for challenging assumptions in prior works. We complement our theoretical findings with empirical results that showcase the potential of the approach.
\end{abstract}

\section{Introduction}

The field of compressive sensing \cite{RefWorks:45,RefWorks:373} has lead to a rich corpus of results showcasing that intrinsically low-dimensional objects living in large ambient dimensional space can be recovered from small numbers of linear measurements. As a complement to the so-called sparse vector recovery problem is the low-rank matrix recovery problem. Motivated by applications in signal processing (see e.g. \cite{ahmed2014compressive,zhang2013hyperspectral,gross2010quantum,candes2011robust}) and data science (see e.g. \cite{basri2003lambertian,candes2009exact}), the latter asks for a low-rank matrix to be recovered from a small number of linear measurements or observations. Formally, one considers a matrix $\X\in\R^{n_1\times n_2}$ of (nearly) rank $r \ll \min(n_1, n_2)$ along with a linear operator $\A : \R^{n_1\times n_2} \rightarrow \R^m$ with $m \ll n_1n_2$, and the goal is to recover $\X$ from the measurements $\y = \A(\X)$. A common approach is to consider the relaxation of the NP-hard rank minimization \cite{eldar2012uniqueness}, leading to the so-called nuclear-norm minimization problem \cite{candes2009exact,recht2010guaranteed},
\begin{equation}\label{eq:NNM}
\hat{\X} = \argmin_{\X\in\R^{n_1\times n_2}} \|\X\|_* \quad\text{subject to}\quad \A(\X) = \y,
\end{equation}
where the nuclear-norm is the L1 norm of the singular values: $\|\X\|_* := \sum_i \sigma_i(\X) = \trace(\sqrt{\X^*\X})$. It has been shown that \eqref{eq:NNM} yields exact recovery of $\X$ (or approximate when the measurements contain noise) when the measurement operator $\A$ obeys some assumptions such as incoherence, restricted isometry, or is constructed from some random models \cite{candes2010tight,candes2009exact,recht2010guaranteed}. Examples include when $\A$ is constructed by taking matrix inner products with matrices containing i.i.d. (sub)-Gaussian entries or when $\A$ views entries of the matrix $\X$ selected uniformly at random. In these and most other cases, the number of measurements required is on the order of $m\approx r\max(n_1, n_2)$.

As in vector sparse recovery, an alternative to optimization based programs like \eqref{eq:NNM} is to use iterative methods that produce estimates that converge to the solution $\X$. Relevant to this paper is the Iterative Hard Thresholding (IHT) method \cite{PaperIHT,blumensath2010normalized,tanner2013normalized}, that can be described succinctly by the update
\begin{equation}\label{eq:IHT}
\X^{j+1} = \H_r\left(\X^j + \A^*(\y - \A(\X^j))\right),
\end{equation}
where $\X^0$ is chosen either as the zero vector/matrix or randomly. Here, $\A^*$ denotes the adjoint of the operator $\A$, and the function $\H_r$ is a thresholding operator. In the vector sparse recovery setting, $\H_r$ simply keeps the $r$  largest in magnitude entries of its input and sets the rest to zero, thereby returning the closest $r$-sparse vector to its input. In the matrix case, it returns the closest rank-$r$ matrix to its input. To guarantee recovery via the IHT method, one may consider the restricted isometry property (RIP) \cite{RefWorks:48}, which asks that the operator $\A$ roughly preserves the geometry of sparse/low-rank vectors/matrices:
$$
(1-\delta_r) \|\X\|_F^2 \leq \|\A(\X)\|_2^2 \leq (1+\delta_r)\|\X\|_F^2 \quad\text{for all $r$-sparse/rank-$r$ matrices $\X$},
$$
where $0 < \delta_r < 1$ is a controlled constant that may depend on $r$ and $\| \cdot \|_F$ denotes the Frobenius norm. For example, when the operator $\A$ satisfies the RIP for $3r$-sparse vectors with constant $\delta_{3r} < 1/\sqrt{32}$, after a suitable number of iterations, IHT exactly recovers any $r$-sparse vector $\x$ from the measurements $\y = \A(\x)$. Moreover, the result is robust and shows that when the measurements $\y$ are corrupted by noise $\y = \A\x + \e$ and the vector $\x$ is not exactly sparse but well-approximated by its $r$-sparse representation $\x_r$, that IHT still produces an accurate estimate to $\x$ with error proportional to $\|\e\|_2 + \frac{1}{\sqrt{r}}\|\x-\x_r\|_1 + \|\x-\x_r\|_2$.  See \cite{blumensath2010normalized} for details.

\subsection{Extension to tensor recovery}

Extending IHT from sparse vector recovery to low-rank matrix recovery is somewhat natural. Indeed, a matrix is low-rank if and only if its vector of singular values is sparse. Extensions to the tensor setting, however, yield some non-trivial challenges. Nonetheless, there are many applications that motivate the use of tensors, ranging from video and longitudinal imaging \cite{liu2012tensor,bengua2017efficient} to machine learning \cite{romera2013multilinear,vasilescu2005multilinear} and differential equations \cite{beck2000multiconfiguration,lubich2008quantum}.

We will write a $d$-order tensor as $\X\in\R^{n_1\times n_2\times\ldots\times n_d}$, where $n_i$ denotes the dimension in the $i$th mode. Unlike the matrix case, for order $d\geq 3$ tensors, there is not one unique notion of rank. In fact, many notions of rank along with their corresponding decompositions have now been defined and studied, including Tucker rank and higher order SVD \cite{tucker1966some,de2000multilinear}, CP-rank \cite{carroll1970analysis,harshman1970foundations},  and tubal rank \cite{kilmer2011factorization, zhang2014novel}. We refer the reader to \cite{kolda2009tensor} for a nice review of tensors and these various concepts. Succinctly, the Tucker format relies on \textit{unfoldings} of the tensor whereas the CP format relies on rank-1 tensor factors. For example, an order $d$ tensor $\X$ can be \textit{matricized} in $d$ ways by unfolding it along each of the $d$ modes \cite{kolda2009tensor}. One can then consider a notion of rank for this tensor as a $d$-tuple $(r_1, r_2, \ldots, r_d)$ where $r_i$ is the rank of the $i$th unfolding. Such a notion is attractive since rank is well-defined for matrices.  The CP format on the other hand avoids the need to matricize or unfold the tensor. For the purposes of this work, we will focus on CP-decompositions and CP-rank.

For vectors $\x$ and $\z$ denote by $\x\otimes\z$ their outer product and for any integer $r$, let $ [r] = \{1,2,...,r \}$. Then one can build a tensor in $\R^{n_1\times n_2\times\ldots\times n_d}$ by taking the outer product $\x_1\otimes\x_2\ldots\otimes \x_d$ where $\x_i \in \R^{n_i}$ and $\x_1\otimes\x_2\ldots\otimes \x_d$ is a rank-$1$ tensor. This leads to the notation of a rank-$r$ tensor by considering vectors $\x_{ij}\in\R^{n_j}$ for $i\in[r]$ and $j\in[d]$ and considering the sum of $r$ rank-$1$ tensors:
$$
\X = \sum_{i=1}^r \x_{i1}\otimes\x_{i2}\otimes\ldots\otimes\x_{id}.
$$
When $\X$ can be written via this decomposition, $\X$ is at most rank $r$. The smallest number of rank-$1$ tensors that can be used to express a tensor $\X$ is then defined to be the rank of the tensor, and a decomposition using that number of rank-$1$ tensors is its rank decomposition. Note that often one may also ask that the vectors $\x_{ij}$ have unit norm, and aggregate the magnitude information into constants $\lambda_i$ so that
$$
\X = \sum_{i=1}^r \lambda_i\x_{i1}\otimes\x_{i2}\otimes\ldots\otimes\x_{id}.
$$

Note that there are many differences between matrix rank and tensor rank.
For example, the rank of  a real-valued tensor may be different when considered over $\R$ versus $\C$ (i.e. if one allows the factor vectors above to be complex-valued or restricted to the reals).
Throughout this paper, we will consider real-valued tensors in $\R$, but our analysis extends to the complex case as well. The CP-rank and CP-decompositions can be viewed as natural extensions of the matrix rank and SVD, and are well motivated by applications such as topic modeling, psychometrics, signal processing, linguistics and many others  \cite{carroll1970analysis,harshman1970foundations,anandkumar2014tensor}.

 Unfortunately, not only is rank-minimization of tensors NP-Hard, but even the relaxation to the nuclear norm minimization using any of these notions of rank is also NP-Hard \cite{hillar2013most}.  Therefore, it is even more crucial to consider other types of methods for tensor recovery. Fortunately, many iterative methods have natural extensions to the tensor setting.

Here, we will focus on the extension of the IHT method \eqref{eq:IHT} to the tensor setting, as put forth in \cite{rauhut2017low}. The authors prove accuracy of the tensor variant under a tensor RIP assumption. Likewise, we will consider measurements of the form $\y = \A(\X)$, where $\A : \R^{n_1\times\ldots\times n_d}\rightarrow \R^m$ is a linear operator. The tensor IHT method (TIHT) of \cite{rauhut2017low} is summarized as in Algorithm \ref{alg:TIHT}.

\begin{algorithm}[H]
\caption{Tensor Iterative Hard Thresholding (TIHT)}\label{alg:TIHT}
\begin{algorithmic}[1]
\State\textbf{Input:} operator $ \H_r$, rank $r$, measurements $\y$, number of iterations $T$
\State\textbf{Output:} $\hat{\X}=\X^{T}$.
\State\textbf{Initialize:} $\X^1=\zero$
\For {$j=0,2,\ldots,T-1$}
\State $\W^j = \X^j + \A^*(\y - \A(\X^j))$
\State $\X^{j+1} = \H_r(\W^j)$ 
\EndFor
\end{algorithmic}
\end{algorithm}

Note that the algorithm depends on the ``thresholding'' operator\footnote{We note that this operator need not really be a true ``thresholding'' operator, but use this nomenclature since the method is derived from the classical iterative hard thresholding method.} as input. In the vector case this operator performs simple thresholding, thus the name of the algorithm. In the matrix case it typically performs a low-rank projection via the SVD. In the tensor case, there are several options for this operator. In \cite{rauhut2017low}, the authors ask that $\H_r$ computes an approximation to the closest rank-$r$ tensor to its input. Such an approximation is necessary since computing the best rank-$r$ tensor may be ill-defined or NP-Hard depending on the notion of rank used. For $d$-order tensors, the approximation in \cite{rauhut2017low} is asked to satisfy
$$
\|\X - \H_r(\X)\|_F \leq C\sqrt{d}\|\X - \X_r\|_F,
$$
where $\X_r$ is the best rank-$r$ approximation to $\X$ using various notions of the Tucker rank. We will adapt this operator to the CP-rank and propose a valid approximation for our purposes later. As in \cite{rauhut2017low}, the recovery of low CP-rank tensors by TIHT will rely on a tensor variant of the RIP, defined below in Definition~\ref{cptrip}.

We will utilize the TRIP for an appropriate set $S_{r,R}$ corresponding to normalized tensors with low CP-rank. In \cite{rauhut2017low}, the TRIP is utilized for several other notions of Tucker rank, namely for tensors with low rank higher order SVD (HOSVD) decompositions, hierarchical Tucker (HT) decompositions, and tensor train (TT) decompositions. The authors then prove that various randomly constructed measurement maps $\A$ satisfy those variations of the TRIP with high probability.  Indeed, letting the rank $r$ bound the rank entries $r_i$ of the appropriate Tucker $d$-tuple (see \cite{rauhut2017low} for details), the TRIP is satisfied when the number of measurements $m$ is on the order of $\delta_r^{-2}(r^d + dnr)\log(d)$ and $\delta_r^{-2}(dr^3 + dnr)\log(dr)$ or HOSVD and TT/HT decompositions, respectively. Such random constructions include those obtained by taking tensor inner products of $\X$ with tensors having i.i.d. (sub)-Gaussian entries. Under the TRIP assumption, the main result of \cite{rauhut2017low} shows that TIHT provides recovery of low Tucker rank tensors, as summarized by the following theorem.

\begin{theorem}[\cite{rauhut2017low}]
Let $\A : \R^{n_1\times n_2\times\ldots \times n_d} \rightarrow \R^m$ satisfy the TRIP (for rank$-r$ HOSVD or TT or HT tensors) with $\delta_{3r} \leq \delta < 1$, and run TIHT with noisy measurements $\y = \A\X +\e$.  Assume that the following holds at each iteration of TIHT:
\begin{equation}\label{eq:verify}
\|\W^j - \X^{j+1}\|_F \leq (1+\varepsilon)\|\W^j - \X\|_F.
\end{equation}
Then the estimates produced by TIHT satisfy:
$$
\|\X^{j+1} - \X\|_F \leq c^j\|\X\|_F + C\|\e\|_2,
$$
where $0<c<1$ and $C$ denote constants that may depend on $\delta$.

As a consequence, after $T = C'\log\left(\|\X\|_F / \|\e\|_2\right)$ iterations, the estimate satisfies
$$
\|\X^T - \X\|_F \leq C''\|\e\|_2,
$$
where $C'$ and $C''$ denote constants that may depend on $\delta$.
\end{theorem}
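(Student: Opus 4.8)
The plan is to establish a per-iteration contraction of the form
$\|\X^{j+1}-\X\|_F \le c\,\|\X^{j}-\X\|_F + C_0\|\e\|_2$ with $0<c<1$, and then to unroll it. Writing $\Z^{j}:=\X^{j}-\X$ (and likewise $\Z^{j+1}:=\X^{j+1}-\X$), the gradient step residual is, using $\y=\A(\X)+\e$,
$$\W^j-\X=\X^j-\X+\A^*(\y-\A(\X^j))=(\Id-\A^*\A)\Z^j+\A^*\e.$$
Since $\X^1=\zero$, once the contraction is in hand the geometric series yields $\|\X^{j+1}-\X\|_F\le c^{\,j}\|\X\|_F+\tfrac{C_0}{1-c}\|\e\|_2$, which is the claimed bound with $C=C_0/(1-c)$; taking $T=C'\log(\|\X\|_F/\|\e\|_2)$ then drives the transient term below $\|\e\|_2$ and gives the corollary.

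To prove the contraction I would expand $\|\Z^{j+1}\|_F^2=\langle\Z^{j+1},\X^{j+1}-\W^j\rangle+\langle\Z^{j+1},\W^j-\X\rangle$ and treat the two inner products separately. The second is the measurement term: substituting the identity above,
$$\langle\Z^{j+1},\W^j-\X\rangle=\langle\Z^{j+1},(\Id-\A^*\A)\Z^j\rangle+\langle\A(\Z^{j+1}),\e\rangle.$$
The crucial observation is that $\X$, $\X^j$, $\X^{j+1}$ each have CP-rank at most $r$, so the collection of all their rank-$1$ factors spans a linear subspace $\mathcal{U}$, every element of which has CP-rank at most $3r$; in particular $\Z^j,\Z^{j+1}\in\mathcal{U}$. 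This is the CP analogue of the ``union of supports has size $3r$'' counting in the vector case. The TRIP with constant $\delta_{3r}$ then bounds the operator norm of $\Id-\A^*\A$ compressed to $\mathcal{U}$, giving the near-orthogonality estimate $|\langle\Z^{j+1},(\Id-\A^*\A)\Z^j\rangle|\le\delta_{3r}\|\Z^{j+1}\|_F\|\Z^j\|_F$, while Cauchy--Schwarz and the upper TRIP bound give $|\langle\A(\Z^{j+1}),\e\rangle|\le\sqrt{1+\delta_{3r}}\,\|\Z^{j+1}\|_F\|\e\|_2$.

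The first inner product, $\langle\Z^{j+1},\X^{j+1}-\W^j\rangle$, is where the thresholding guarantee enters and is the main obstacle. In the idealized case where $\H_r$ returns an exact best rank-$r$ approximation, the variational inequality $\|\W^j-\X^{j+1}\|_F\le\|\W^j-\X\|_F$ (valid since $\X$ is a rank-$r$ competitor) can be squared and rearranged to give $\langle\Z^{j+1},\X^{j+1}-\W^j\rangle\le\tfrac12\|\Z^{j+1}\|_F^2$; combining with the measurement bounds and dividing by $\|\Z^{j+1}\|_F$ yields exactly $\|\Z^{j+1}\|_F\le2\delta_{3r}\|\Z^j\|_F+2\sqrt{1+\delta_{3r}}\|\e\|_2$, so $c=2\delta_{3r}$ and the contraction needs $\delta_{3r}<\tfrac12$. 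The difficulty in our setting is that \eqref{eq:verify} only guarantees near-optimality up to $1+\varepsilon$, and a naive expansion of $\|\W^j-\X^{j+1}\|_F^2\le(1+\varepsilon)^2\|\W^j-\X\|_F^2$ produces a surplus term proportional to $((1+\varepsilon)^2-1)\|\W^j-\X\|_F^2$ that involves the full, non-low-rank Frobenius norm of $\W^j-\X$ and hence is not controlled by the TRIP alone. The technical heart of the proof is therefore to tame this surplus --- for instance by splitting $\W^j-\X$ into its component in $\mathcal{U}$ (governed by the TRIP) and its orthogonal complement, and showing that for $\varepsilon$ and $\delta$ small enough the inflated constant still satisfies $c<1$. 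Once such a $c$ is secured, the unrolling described above completes both the main estimate and its corollary.
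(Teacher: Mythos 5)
Your skeleton is the right one and matches both the paper's proof of its Theorem \ref{mainthm} and the argument of Theorem 1 in the cited work: write $\W^j-\X=(\Id-\A^*\A)(\X^j-\X)+\A^*\e$, expand the squared error, restrict $\Id-\A^*\A$ to the subspace spanned by $\X$, $\X^j$, $\X^{j+1}$ so that the TRIP with constant $\delta_{3r}$ yields the near-orthogonality bound, apply Cauchy--Schwarz with the upper TRIP inequality to the noise term, and extract a contraction of the form $\|\X^{j+1}-\X\|_F\le 2\delta_{3r}\|\X^j-\X\|_F+2\sqrt{1+\delta_{3r}}\|\e\|_2+(\text{surplus})$ before unrolling. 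The steps you actually carry out are correct, including the variational inequality $\langle\X^{j+1}-\X,\,\X^{j+1}-\W^j\rangle\le\tfrac12\|\X^{j+1}-\X\|_F^2$ in the exact-thresholding case and the resulting constant $c=2\delta_{3r}$.

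However, the proof is not complete: you explicitly defer the ``technical heart,'' namely the surplus term $\bigl((1+\varepsilon)^2-1\bigr)\|\W^j-\X\|_F^2$ produced by squaring the hypothesis \eqref{eq:verify}, and the fix you gesture at does not go through as stated. Writing $\mathcal{Q}^j$ for the orthogonal projection onto your subspace $\mathcal{U}$, the complementary component is $(\Id-\mathcal{Q}^j)(\W^j-\X)=(\Id-\mathcal{Q}^j)\bigl(-\A^*\A(\X^j-\X)+\A^*\e\bigr)$, since $\X^j-\X\in\mathcal{U}$ is annihilated; bounding this requires the \emph{unrestricted} operator norm $\|\A\|_{2\rightarrow 2}$, which the TRIP does not control and which is typically large for the random ensembles of interest. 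This is precisely the point the paper flags: the cited work needs ``additional computation'' to bound $\|\W^j-\X\|_F$, while the present paper sidesteps the issue entirely by strengthening the hypotheses ($\delta_{3r}\le\frac12 2^{-n^\alpha}$ and noise bounded by $2^{-0.5n^\alpha}/(2\|\A\|_{2\rightarrow 2})$), which via Corollary \ref{woodcor} force $\|\W^j-\X\|_F^2\le 2^{-n^\alpha}$ and reduce the surplus to the harmless additive constant $(1+\epsilon)^2 2^{-n^\alpha}$ appearing in Theorem \ref{mainthm}. To finish your argument you must either supply the missing bound on $\|\W^j-\X\|_F$ together with an explicit smallness condition tying $\varepsilon$ to $\delta$ and $\|\A\|_{2\rightarrow 2}$ so that the inflated constant stays below one, or adopt the paper's stronger assumptions; as written, the contraction constant $c<1$ is asserted but not established.
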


As the authors themselves point out, the challenge with this result is that \eqref{eq:verify} may be challenging to verify. 

\subsection{Contributions and Organization}

{The main contribution of this work is the extension and analysis of TIHT to low CP-rank tensors. Using recent work in low CP-rank tensor approximations, this work provides theoretical guarantees for the recovery of low CP-rank tensors without requiring assumptions on the hard thresholding operation $\mathcal{H}_r$. We also show that tensor measurement maps with properly normalized Gaussian random variables satisfy a CP-rank version of the tensor RIP (TRIP) with high probability. These contributions are then supported by synthetically generated as well as real world experiments on video data.}

The remainder of the paper is organized as follows. Section \ref{sec:main} contains our main results. Theorem \ref{mainthm} proves accurate recovery of TIHT for tensors with low CP-rank, under an appropriate TRIP assumption, without the need to verify an assumption like \eqref{eq:verify}. In Section \ref{sec:maintrip} we prove Theorem \ref{coverthm} showing that measurement maps satisfying our TRIP assumption can be obtained by random constructions. Section \ref{sec:exps} showcases numerical results for real and synthetic tensors, and we conclude in Section \ref{sec:conclude}.

\section{Main Results for TIHT for CP-rank}\label{sec:main}

First, let us formally define the set of tensors for which we will prove accurate recovery using TIHT. Given some $R>0$, let us define the set of tensors:
\begin{equation}\label{SrR}
S_{r,R} := \{ \X \in \R^{n_1\times n_2\times\ldots\times n_d} : \|\X\|_F = 1, \X = \sum_{i=1}^r \x_{i1}\otimes \x_{i2}\otimes\ldots\otimes \x_{id}, x_{ij}\in\R^{n_j}, \|\x_{ij}\|_2 \leq R\}.
\end{equation}
In other words, $S_{r,R}$ is the set of all CP-rank $r$ tensors with bounded factors. Such tensors are not unusual and have been used to provide theoretical guarantees in previous works~\cite{song2019relative,bhaskara2014uniqueness}.

We first define an analog of the tensor RIP (TRIP) for low CP-rank tensors.

\begin{definition}\label{cptrip}
The measurement operator $\A : \R^{n_1\times n_2\times\ldots \times n_d} \rightarrow \R^m$ satisfies the TRIP adapted to $S_{r,R}$ with parameter $\delta_r>0$ when
$$
(1-\delta_r)\|\X\|_F^2 \leq \|\A(\X)\|_2^2 \leq (1+\delta_r)\|\X\|_F^2
$$
for all $\X\in S_{r,R}$, defined in \eqref{SrR}.
\end{definition}

We will utilize the method and result from \cite{song2019relative} that guarantees the following.

\begin{theorem}[\cite{song2019relative}, Theorem 1.2]\label{woodruff}
Let $\W$ be an arbitrary order-$d$ tensor, $\varepsilon,\alpha>0$, and positive integer $r$, and set
$$
\gamma := \min_{\hat{\W} : \text{rank}(\hat{\W})=r} \|\hat{\W} - \W\|_F.
$$
Suppose there is a rank-$r$ tensor $\hat{\W}$ satisfying $\|\hat{\W} - \W\|_F^2 \leq \gamma^2 + 2^{-n^\alpha}$ and whose CP  factors have norms bounded by $2^{n^\alpha}$. Then there is an efficient algorithm that outputs a rank-$r$ tensor estimate $\tilde{\W}$ such that
$$
\|\W - \tilde{\W}\|_F^2 \leq (1+\varepsilon)\gamma^2 + 2^{-n^\alpha}.
$$
We will write this method as $\H_r(\W) = \tilde{\W}$.
\end{theorem}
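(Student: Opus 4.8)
The plan is to combine two ingredients: linear \emph{sketching} to collapse the ambient dimensions down to $\mathrm{poly}(r/\varepsilon)$, followed by an approximate \emph{polynomial-system solver} over the reals that actually produces the factors in the reduced problem. The roles of the two $2^{-n^\alpha}$ terms and of the $2^{n^\alpha}$ bound on the factor norms will be to control, respectively, the numerical precision and the bit complexity of this second step.

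First I would set up the sketching reduction. Writing the unknown rank-$r$ tensor as $\tilde{\W} = \sum_{i=1}^r \x_{i1}\otimes\cdots\otimes \x_{id}$ with factor matrices $U^{(1)},\ldots,U^{(d)}$ (the $j$th having columns $\x_{1j},\ldots,\x_{rj}$), the key structural fact is that fixing all but one factor matrix makes the objective $\|\W-\tilde{\W}\|_F^2$ a linear least-squares problem in the remaining factor. Applying an appropriate sketching operator $S_j$ to mode $j$ for each $j$, I would show that there exists a rank-$r$ tensor whose $j$th factor lies in the column span of the mode-$j$ unfolding of the sketched tensor and which attains cost at most $(1+\varepsilon)\gamma^2$ up to the additive slack. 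This is the tensor analogue of the ``sketch-and-solve'' subspace-embedding argument for matrices, and it reduces the search for each factor from $\R^{n_j}$ to a space of dimension $\mathrm{poly}(r/\varepsilon)$.

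Next I would reduce the (now low-dimensional) problem to deciding a system of polynomial inequalities. After sketching, $\tilde{\W}$ is parameterized by $d$ coefficient vectors of dimension $\mathrm{poly}(r/\varepsilon)$ each, and the objective $\|\W-\tilde{\W}\|_F^2$ is a polynomial of degree $2d$ in these $\mathrm{poly}(r/\varepsilon)$ variables. I would then feed the query ``does there exist a setting of the coefficients with objective at most $\tau$?'' to an algorithm for the existential theory of the reals (e.g.\ Renegar's, or Basu--Pollack--Roy), binary-searching on the threshold $\tau$. The hypothesis that a near-optimal $\hat{\W}$ exists with factor norms bounded by $2^{n^\alpha}$ guarantees a feasible witness of bounded bit complexity, so the solver terminates in time polynomial in the input size and in $n^\alpha$, and returns an approximate minimizer whose objective exceeds the true reduced optimum by at most $2^{-n^\alpha}$. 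Finally, lifting the coefficient vectors back through the sketch bases yields the full-dimensional estimate $\tilde{\W}$, and the cost bound transfers by the guarantee from the first step, giving $\|\W-\tilde{\W}\|_F^2 \leq (1+\varepsilon)\gamma^2 + 2^{-n^\alpha}$.

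The main obstacle is the first step: establishing that sketching preserves \emph{relative} error in the tensor setting. Unlike the matrix case there is no SVD to lean on, and the set of rank-$\le r$ tensors is not closed, so the best rank-$r$ approximation may fail to exist --- this is precisely why the statement is phrased through the near-optimal witness $\hat{\W}$ and carries additive slack. Making the subspace-embedding argument go through therefore requires applying the sketches mode by mode and carefully tracking how the approximation error accumulates across all $d$ modes, using $\hat{\W}$ (rather than a nonexistent exact optimizer) as the reference solution throughout. By comparison, the polynomial-system solving is essentially off-the-shelf once the reduction is in place; its only subtlety is keeping the bit complexity bounded, which is exactly what the $2^{n^\alpha}$ norm bound buys and what forces the $2^{-n^\alpha}$ additive term in the conclusion.
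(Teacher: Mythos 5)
This statement is not proved in the paper at all: it is imported verbatim as Theorem~1.2 of the cited reference \cite{song2019relative}, so there is no in-paper argument to compare against. Your outline --- mode-by-mode sketching to reduce to a $\mathrm{poly}(r/\varepsilon)$-dimensional problem, followed by a solver for the existential theory of the reals whose bit complexity is controlled by the $2^{n^\alpha}$ bound on the factor norms and which is responsible for the additive $2^{-n^\alpha}$ slack --- is a faithful reconstruction of the strategy actually used in that reference, so at the level of a sketch it takes essentially the same route as the original source; just be aware that it is a strategy outline rather than a complete proof, and that within this paper the correct move is simply to cite the result.
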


Our variant of the TIHT method will utilize this result. It can be summarized by the update steps, initialized with $\X^0 = \mathbf{0}$:
\begin{align}
\W^j &= \X^j + \A^*(\y - \A(\X^j)) \label{eq:cpTIHT_GD}\\
\X^{j+1} &= \H_r(\W^j) \quad\text{as in Theorem \ref{woodruff}.\label{eq:cpTIHT_HT}}
\end{align}

 In our context, Theorem \ref{woodruff} means the following.

\begin{corollary}\label{woodcor}
Let $\alpha, \varepsilon >0 $, and let $\X$ be an arbitrary CP-rank $r$ tensor with bounded factors; in particular let $\X\in S_{r, 2^{n^\alpha}}$. Assume $\A$ is a measurement operator satisfying the TRIP with parameter $\delta = \delta_{3r} < \frac{1}{2}2^{-n^\alpha}$.

Assume measurements $\y = \A(\X) + \z$ with bounded noise $\|\z\|_2 \leq \frac{2^{-0.5n^\alpha}}{2\|\A\|_{2\rightarrow 2}}$. Using the notation in~\eqref{eq:cpTIHT_GD} and~\eqref{eq:cpTIHT_HT}, we have
$$
\|\W^0 - \X^{1}\|_F^2 \leq (1+\varepsilon)\|\W^0 - \X\|_F^2 + 2^{-n^\alpha},
$$
and $\X^1$ is of CP-rank $r$ with factors bounded by $2^{n^\alpha}$.

In addition, if $\|\X^{j} - \X\|_F \leq \|\X^{0} - \X\|_F$, then we have that
$$
\|\W^{j} - \X^{j+1}\|_F^2 \leq (1+\varepsilon)\|\W^j - \X\|_F^2 + 2^{-n^\alpha},
$$
and $\X^{j+1}$ is of CP-rank $r$ with factors bounded by $2^{n^\alpha}$.
\end{corollary}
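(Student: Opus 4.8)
The plan is to derive both conclusions as a single application of Theorem~\ref{woodruff} to the tensor $\W = \W^j$ produced by the gradient step~\eqref{eq:cpTIHT_GD}, where essentially all the work is in checking that the hypothesis of that theorem is met with the true signal $\X$ itself playing the role of the required bounded-factor near-minimizer. Since $\X\in S_{r,2^{n^\alpha}}$ is rank-$r$ with CP factors bounded by $2^{n^\alpha}$, it already satisfies the factor requirement, so the only substantive claim is that $\X$ is within the allowed slack of optimal, namely $\|\X-\W^j\|_F^2\le \gamma^2+2^{-n^\alpha}$ with $\gamma:=\min_{\mathrm{rank}(\hat\W)=r}\|\hat\W-\W^j\|_F$. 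Granting this, Theorem~\ref{woodruff} immediately outputs $\X^{j+1}=\H_r(\W^j)$ of CP-rank $r$ with factors bounded by $2^{n^\alpha}$ and satisfying $\|\W^j-\X^{j+1}\|_F^2\le (1+\varepsilon)\gamma^2+2^{-n^\alpha}$; combining this with the trivial inequality $\gamma\le\|\W^j-\X\|_F$ (as $\X$ is itself a feasible rank-$r$ competitor) gives the stated estimate. The $j=0$ assertion is simply the special case $\X^0=\mathbf 0$.

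The crux is the near-optimality of $\X$. First I would rewrite the residual, using $\y=\A(\X)+\z$ and~\eqref{eq:cpTIHT_GD}, as $\W^j-\X=(\A^*\A-\Id)(\X-\X^j)+\A^*\z$. Then for an \emph{arbitrary} rank-$r$ tensor $\boldsymbol V$ the elementary identity $\|\X-\W^j\|_F^2-\|\boldsymbol V-\W^j\|_F^2 = 2\langle \boldsymbol V-\X,\,\W^j-\X\rangle-\|\boldsymbol V-\X\|_F^2$ holds, and the point is that the tensor paired against $\W^j-\X$ here, namely $\boldsymbol V-\X$, has CP-rank at most $2r$. I would split $\X-\X^j$ into its two rank-$r$ pieces and apply the standard RIP inner-product bound (polarization on rank-$\le 3r$ tensors, which is exactly where $\delta_{3r}$ enters) to obtain $|\langle \boldsymbol V-\X,(\A^*\A-\Id)(\X-\X^j)\rangle|\le \delta_{3r}\|\boldsymbol V-\X\|_F(\|\X\|_F+\|\X^j\|_F)$, while the noise term obeys $|\langle \boldsymbol V-\X,\A^*\z\rangle|\le \|\A\|_{2\rightarrow 2}\|\z\|_2\,\|\boldsymbol V-\X\|_F$. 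Using $\|\X\|_F=1$ together with $\|\X^j\|_F\le 2$ (from the standing hypothesis $\|\X^j-\X\|_F\le\|\X^0-\X\|_F=1$, and for $j=0$ only $\X$ appears), this yields $\langle \boldsymbol V-\X,\W^j-\X\rangle\le \beta\|\boldsymbol V-\X\|_F$ with $\beta:=3\delta_{3r}+\|\A\|_{2\rightarrow 2}\|\z\|_2$. The inequality $2\beta t-t^2\le \beta^2$ at $t=\|\boldsymbol V-\X\|_F$ then gives $\|\X-\W^j\|_F^2\le\|\boldsymbol V-\W^j\|_F^2+\beta^2$ for every rank-$r$ $\boldsymbol V$; passing to the infimum over $\boldsymbol V$ produces $\|\X-\W^j\|_F^2\le \gamma^2+\beta^2$. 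Finally, since $\delta_{3r}<\tfrac12 2^{-n^\alpha}$ makes the $\delta_{3r}$ contribution of order $2^{-n^\alpha}$ and hence negligible against the $\tfrac12 2^{-0.5n^\alpha}$ noise term, one has $\beta\le 2^{-0.5n^\alpha}$ and therefore $\beta^2\le 2^{-n^\alpha}$, closing the hypothesis.

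I expect the main obstacle to be conceptual rather than computational: one must resist bounding $\|\W^j-\X\|_F$ directly, because the full Frobenius norm of $(\A^*\A-\Id)(\X-\X^j)$ is \emph{not} controlled by the TRIP of Definition~\ref{cptrip} — the property only constrains $\A$ against low-rank tensors. The entire argument is arranged so that $\W^j-\X$ is tested only against the low-rank tensor $\boldsymbol V-\X$, which is what keeps every RIP invocation at order $3r$ and lets the clean quadratic bound go through. The most delicate bookkeeping is twofold: ensuring that the rank-$r$ competitors $\boldsymbol V$ (and hence the combinations appearing under polarization) carry factors small enough to lie in the bounded-factor class $S_{\cdot,R}$ on which the TRIP is assumed — the border-rank phenomenon for $d\ge 3$ means this is not automatic and is the technical reason the theory is set up over $S_{r,R}$ in the first place — and exploiting the built-in $2^{-n^\alpha}$ slack of Theorem~\ref{woodruff}, which is precisely what lets one compare against an arbitrary rank-$r$ $\boldsymbol V$ and only afterward pass to the (possibly unattained) infimum defining $\gamma$.
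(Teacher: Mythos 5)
Your route is genuinely different from the paper's, and it contains a gap at its central step. You compare $\X$ against an \emph{arbitrary} rank-$r$ competitor $\boldsymbol{V}$ and control $\langle \boldsymbol{V}-\X,(\A^*\A-\Id)(\X-\X^j)\rangle$ by the polarization form of the TRIP. But Definition~\ref{cptrip} only constrains $\A$ on the bounded-factor class $S_{r,R}$, and the competitors $\boldsymbol{V}$ realizing (or approaching) the infimum $\gamma$ need not have factors bounded by $R$ --- indeed, for $d\ge 3$ the border-rank phenomenon means near-minimizers can be forced to have arbitrarily large, cancelling factors. You flag exactly this issue in your last paragraph as ``delicate bookkeeping,'' but it is not bookkeeping: without a bounded-factor guarantee on $\boldsymbol{V}$ the TRIP simply cannot be invoked on $\boldsymbol{V}-\X$, and the inequality $\|\X-\W^j\|_F^2\le\gamma^2+\beta^2$ does not follow. (There is also a minor quantitative slip: your $\beta=3\delta_{3r}+\|\A\|_{2\rightarrow 2}\|\z\|_2$ can exceed $2^{-0.5n^\alpha}$ unless $n^\alpha$ is large enough that $3\cdot 2^{-n^\alpha}\le \tfrac12 2^{-0.5n^\alpha}$.)

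The deeper issue is that the entire detour through $\gamma$ is unnecessary. Since $\gamma^2\ge 0$, the hypothesis of Theorem~\ref{woodruff} already follows from the \emph{absolute} bound $\|\W^j-\X\|_F^2\le 2^{-n^\alpha}$, with no comparison to any competitor. That is what the paper proves, in three lines: $\|\W^j-\X\|_F=\|(\A^*\A-\Id)(\X-\X^j)+\A^*\z\|_F\le \delta\|\X-\X^j\|_F+\|\A\|_{2\rightarrow 2}\|\z\|_2\le \tfrac12 2^{-n^\alpha}+\tfrac12 2^{-0.5n^\alpha}$, whose square is at most $2^{-n^\alpha}$. This intermediate bound is moreover reused explicitly in the proof of Theorem~\ref{mainthm}, so your argument would not supply what the downstream proof needs even if repaired. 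Your objection to the direct route --- that the TRIP controls quadratic forms against low-rank tensors rather than the Frobenius norm of $(\A^*\A-\Id)(\X-\X^j)$ itself --- is a legitimate criticism of the paper's step $\|(\A^*\A-\Id)(\X-\X^j)\|_F\le\delta\|\X-\X^j\|_F$ as a consequence of Definition~\ref{cptrip} alone; but your replacement trades that difficulty for the unbounded-competitor one rather than resolving it, and in the process discards the simple observation ($\gamma^2\ge 0$) that the paper's proof is built on.
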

\begin{proof}
To apply Theorem \ref{woodruff}, we will verify there is a rank-$r$ tensor $\hat{\W}$ with bounded factors that satisfies $\|\hat{\W} - \W^0\|_F^2 \leq  2^{-n^\alpha} \leq \gamma^2 + 2^{-n^\alpha}$. Our choice for $\hat{\W}$ is precisely the tensor $\X$. Indeed, we have
\begin{align*}
\|\W^0 - \X\|_F &= \|\A^*\A(\X) + \A^*\z - \X\|_F\\
&\leq \|(\A^*\A - \Id)(\X)\|_F + \|\A^*\z\|_F\\
&\leq \delta\|\X\|_F +  \|\A\|_{2\rightarrow 2}\|\z\|_2\\
&\leq \frac{1}{2}2^{-n^\alpha} + \frac{1}{2}2^{-0.5n^\alpha}.
\end{align*}
Thus, $\|\W^0 - \X\|_F^2 \leq (\frac{1}{2}2^{-n^\alpha} + \frac{1}{2}2^{-0.5n^\alpha})^2 \leq 2^{-n^\alpha}$, so by Theorem \ref{woodruff}, the output $\X^1$ satisfies
$$
\|\W^0 - \X^{1}\|_F^2 \leq (1+\varepsilon)\min_{\hat{\W} : \text{rank}(\hat{\W})=r} \|\hat{\W} - \W^1\|_F^2 + 2^{-n^\alpha} \leq (1+\varepsilon)\|\W^0 - \X\|_F^2 + 2^{-n^\alpha}.
$$
To prove the second part, we proceed in the same way. Namely, we have
\begin{align*}
\|\W^j - \X\|_F &= \|\X^j + \A^*\A(\X) + \A^*\z - \A^*\A\X^j - \X\|_F\\
&\leq \|(\A^*\A - \Id)(\X - \X^j)\|_F + \|\A^*\z\|_F\\
&\leq \delta\|\X - \X^j\|_F +  \|\A\|_{2\rightarrow 2}\|\z\|_2\\
&\leq  \delta\|\X - \X^0\|_F +  \|\A\|_{2\rightarrow 2}\|\z\|_2\\
&= \delta\|\X\|_F +  \|\A\|_{2\rightarrow 2}\|\z\|_2\\
&\leq \frac{1}{2}2^{-n^\alpha} + \frac{1}{2}2^{-0.5n^\alpha}.
\end{align*}
Thus, $\|\W^j - \X\|_F^2 \leq (\frac{1}{2}2^{-n^\alpha} + \frac{1}{2}2^{-0.5n^\alpha})^2 \leq 2^{-n^\alpha}$, so by Theorem \ref{woodruff}, the output $\X^{j+1}$ satisfies
$$
\|\W^j - \X^{j+1}\|_F^2 \leq (1+\varepsilon)\min_{\hat{\W} : \text{rank}(\hat{\W})=r} \|\hat{\W} - \W^j\|_F^2 + 2^{-n^\alpha} \leq (1+\varepsilon)\|\W^j - \X\|_F^2 + 2^{-n^\alpha}.
$$
\end{proof}

Our goal will be to prove that the TIHT variant described in~\eqref{eq:cpTIHT_GD}-\eqref{eq:cpTIHT_HT} provides accurate recovery of tensors in $S_{r,R}$ \eqref{SrR}, provided that the measurement operator $\A$ satisfies the CP-rank analog of the TRIP.

We now proceed with our main theorems.

\begin{theorem}[TIHT with bounded low CP-rank]\label{mainthm}
Let $\X\in S_{r,2^{n^\alpha}}$.
Consider the TIHT method described in~\eqref{eq:cpTIHT_GD}-\eqref{eq:cpTIHT_HT}, assume $\A$ satisfies the TRIP with parameter $\delta=\delta_{3r} \leq \frac{1}{2}2^{-n^\alpha}$ as in Definition~\ref{cptrip}, and run TIHT with noisy measurements $\y = \A(\X) + \z$ where the noise is bounded $\|\z\|_2 \leq \frac{2^{-n^\alpha}}{2\|\A\|_{2\rightarrow 2}}$. Then TIHT has iterates that satisfy
\begin{equation}
\|\X^{j+1} - \X \|_F \leq (2 \delta)^j \|\X^{0} - \X \|_F + \frac{2 \sqrt{1+\delta}}{1 - 2 \delta}\|\z \|_2 + \frac{(1+\epsilon) 2^{-0.5n^\alpha}}{1 - 2 \delta}.
\end{equation}
As a consequence, recovery error on the order of the upper bound of the noise, $2^{-n^\alpha}$, is achieved after roughly $\lceil \log_{1/2\delta}(\|\X^0 - \X\|_F / \|\z\|_2) \rceil $ iterations.
\end{theorem}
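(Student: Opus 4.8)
The plan is to establish a single-step contraction of the form
$$\|\X^{j+1}-\X\|_F \le 2\delta\,\|\X^{j}-\X\|_F + 2\sqrt{1+\delta}\,\|\z\|_2 + (1+\varepsilon)2^{-0.5n^\alpha},$$
valid at every iteration, and then to unroll this geometric recursion. Since invoking Corollary~\ref{woodcor} at step $j$ requires the hypothesis $\|\X^{j}-\X\|_F \le \|\X^{0}-\X\|_F$, I would run the argument as an induction: the base case $j=0$ is exactly the first conclusion of Corollary~\ref{woodcor}, and the contraction inequality together with $2\delta<1$ and the smallness of the noise-floor terms feeds the induction hypothesis forward, so that the Corollary remains applicable at the next step.

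To prove the one-step bound, write $\Y := \X^{j+1}-\X$ and $v := \W^{j}-\X$, so that $\X^{j+1}-\W^{j} = \Y - v$. Expanding $\|\Y\|_F^2 = \|(\X^{j+1}-\W^j)+v\|_F^2$ and substituting the squared estimate $\|\W^{j}-\X^{j+1}\|_F^2 \le (1+\varepsilon)\|v\|_F^2 + 2^{-n^\alpha}$ from Corollary~\ref{woodcor}, the cross terms collapse to give $\|\Y\|_F^2 \le \varepsilon\|v\|_F^2 + 2^{-n^\alpha} + 2\langle \Y, v\rangle$. The residual $\|v\|_F$ is itself tiny by the norm estimate already derived inside the proof of Corollary~\ref{woodcor}, so the term $\varepsilon\|v\|_F^2 + 2^{-n^\alpha}$ contributes only at the $2^{-0.5n^\alpha}$ floor after taking square roots.

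The heart of the argument is the cross term $\langle \Y, v\rangle$. Using $\W^{j}-\X = (\Id-\A^*\A)(\X^{j}-\X) + \A^*\z$, I would split it as $\langle \Y,(\Id-\A^*\A)(\X^{j}-\X)\rangle + \langle \A\Y,\z\rangle$. For the first piece I would invoke the bilinear (polarized) form of the TRIP: since $\X^{j+1}$, $\X^{j}$ and $\X$ each have CP-rank at most $r$, the tensors $(\X^{j}-\X)\pm\Y$ have rank at most $3r$, so polarizing the quadratic TRIP bound yields $|\langle \Y,(\Id-\A^*\A)(\X^{j}-\X)\rangle| \le \delta\|\Y\|_F\|\X^{j}-\X\|_F$ with $\delta=\delta_{3r}$. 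For the second piece, $\langle \A\Y,\z\rangle \le \|\A\Y\|_2\|\z\|_2 \le \sqrt{1+\delta}\,\|\Y\|_F\|\z\|_2$, again by the TRIP applied to the rank-$\le 2r$ tensor $\Y$. Substituting these bounds turns the expanded inequality into a scalar quadratic $a^2 \le 2Pa + Q$ in $a=\|\Y\|_F$, with $P = \delta\|\X^{j}-\X\|_F + \sqrt{1+\delta}\,\|\z\|_2$ and $Q = \varepsilon\|v\|_F^2 + 2^{-n^\alpha}$, whose solution $a \le 2P+\sqrt{Q}$ is precisely the claimed contraction.

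Finally, unrolling the recursion and summing the geometric series $\sum_k (2\delta)^k = (1-2\delta)^{-1}$ produces the stated bound, and the iteration count follows by forcing the transient term $(2\delta)^{j}\|\X^{0}-\X\|_F$ below the noise level $\|\z\|_2$. The step I expect to be the most delicate is the bilinear TRIP bound: the TRIP in Definition~\ref{cptrip} is assumed only on the normalized, bounded-factor set $S_{r,R}$, so I must check that each tensor fed to it --- in particular the normalized versions of $(\X^{j}-\X)\pm\Y$ --- genuinely has CP-rank at most $3r$ and factor norms within the allowed bound $R=2^{n^\alpha}$. Guaranteeing that these factor bounds propagate through the thresholding step (which Corollary~\ref{woodcor} certifies for the iterates themselves) and survive normalization is the main technical obstacle, and is where the bounded-factor hypothesis built into $S_{r,R}$ is essential.
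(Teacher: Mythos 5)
Your argument is correct and follows the same skeleton as the paper's proof: start from the near-optimality guarantee of Corollary~\ref{woodcor}, expand $\|\W^j-\X^{j+1}\|_F^2$ about $\X$ to isolate $\|\X^{j+1}-\X\|_F^2$ plus a cross term, bound the cross term via the TRIP and Cauchy--Schwarz, and iterate the resulting one-step contraction. The differences are only in the mechanics of two sub-steps. For the bilinear term $\langle \X^{j+1}-\X,(\Id-\A^*\A)(\X^j-\X)\rangle$ you polarize the TRIP directly, checking that $(\X^j-\X)\pm(\X^{j+1}-\X)$ has CP-rank at most $3r$; the paper instead restricts $\A$ to the span of $\X^{j+1},\X^j,\X$ and invokes $\|\Id-{\A_\mathcal{Q}^j}^*\A_\mathcal{Q}^j\|_{2\rightarrow 2}\le\delta_{3r}$. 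These are equivalent routes to the same $\delta\,\|\X^{j+1}-\X\|_F\|\X^j-\X\|_F$ bound. For extracting the linear recursion you solve the scalar quadratic $a^2\le 2Pa+Q$ to get $a\le 2P+\sqrt{Q}$, which is cleaner than the paper's weight-splitting with $\beta+\gamma=1$ (where, incidentally, the claim $f(\beta)=1-\beta+\sqrt{\beta}\le 1$ is backwards --- $f\ge 1$ on $[0,1]$ --- though harmless since $f$ should enter as a reciprocal). Two things you do that the paper leaves implicit are worth keeping: the explicit induction maintaining $\|\X^j-\X\|_F\le\|\X^0-\X\|_F$ so that Corollary~\ref{woodcor} remains applicable at every step, and the observation that the TRIP of Definition~\ref{cptrip} is stated only for normalized tensors with factor norms at most $R$, so one must check that the normalized rank-$\le 3r$ differences fed into the (polarized) TRIP actually lie in $S_{3r,R}$; this last point is a genuine soft spot shared by both your write-up and the paper's, and neither fully resolves it.
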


\begin{proof}
The proof follows that of Theorem 1 in \cite{rauhut2017low} with some crucial modifications. In particular, instead of requiring assumption (31) in the proof the aforementioned theorem, we have Corollary~\ref{woodcor}. As a direct result, we also have a nice upper bound on $\|\W^j - \X \|_F$ whereas Theorem 1 in \cite{rauhut2017low} requires additional computation for upper bounding this term.

Starting from Corollary~\ref{woodcor}, we have that
\begin{equation*}
2^{-n^\alpha} + (1+\epsilon) \|\W^j - \X \|^2_F \geq \|\W^{j} - \X^{j+1} \|^2_F.
\end{equation*}
Adding and subtracting $\X$ to the right hand side, rearranging terms, and substituting the value of $\W^j$ from~\eqref{eq:cpTIHT_GD}, we can write:
\begin{align*}
\|\X^{j+1} - \X \|^2_F &\leq 2 \langle \X^j - \X, \X^{j+1} - \X \rangle  - 2 \langle \A ( \X^j - \X), \A(\X^{j+1} - \X) \rangle\\
&\quad\quad +2 \langle \z, \A(\X^{j+1} - \X) \rangle + (2\epsilon + \epsilon^2) \| \W^j - \X \|_F^2 + 2^{-n^\alpha}.
\end{align*}
Using the fact that  $rank(\X^{j+1} - \X) \leq 2r < 3r$, we invoke TRIP and the Cauchy-Schwarz inequality on the third term and the upper bound $\| \W^j - \X\|_F^2 \leq 2^{-n^\alpha}$ shown in Corollary~\ref{woodcor} on the fourth term in the summation to obtain
\begin{align*}
\|\X^{j+1} - \X \|^2_F &\leq 2 \langle \X^j - \X, \X^{j+1} - \X \rangle  - 2 \langle \A ( \X^j - \X), \A(\X^{j+1} - \X) \rangle\\ &\quad\quad +2 \sqrt{1+\delta_{3r} }  \| \X^{j+1} - \X \|_F\|\z\|_2  + (1+\epsilon)^2 2^{-n^\alpha}.
\end{align*}
Let $\mathcal{Q}^j:\mathbb{R}^{n_1 \times n_2 \times \dots \times n_d } \rightarrow \mathbb{R}^{U^j}$ by the orthogonal projection operator into the subspace spanned by $\X^{j+1}$, $\X^j$, and $\X$. Additionally denote $\A_\mathcal{Q}^j(\Z):=\A (\mathcal{Q}^j(\Z))$ for all $\Z \in \mathbb{R}^{n_1\times n_2 \times \dots n_d}$. Using this notation, we can rewrite the above inequality as:
\begin{align*}
\|\X^{j+1} - \X \|^2_F &\leq 2 \langle \X^j - \X, \X^{j+1} - \X \rangle  - 2 \langle \A_\mathcal{Q}^j ( \X^j - \X), \A_\mathcal{Q}^j (\X^{j+1} - \X) \rangle\\ &\quad\quad +2 \sqrt{1+\delta_{3r} }  \| \X^{j+1} - \X \|_F\|\z\|_2  + (1+\epsilon)^2 2^{-n^\alpha} \\
&\leq 2 \| \Id - {\A_\mathcal{Q}^j}^*\A_\mathcal{Q}^j \|_{2 \rightarrow 2} \|\X^{j+1} - \X \|_F \|\X^{j} - \X \|_F \\ &\quad\quad + 2 \sqrt{1+\delta_{3r} }  \| \X^{j+1} - \X \|_F\|\z\|_2  + (1+\epsilon)^2 2^{-n^\alpha},
\end{align*}
where the final inequality uses simplification to combine the first two terms and the Cauchy-Schwarz inequality.

Let $\beta$, $\gamma \in [0,1]$ such that $\beta + \gamma = 1$ then we can write:
\begin{align*}
(1-\beta-\gamma)\|\X^{j+1} - \X \|^2_F & \leq 2 \| \Id - {\A_\mathcal{Q}^j}^*\A_\mathcal{Q}^j \|_{2 \rightarrow 2} \|\X^{j+1} - \X \|_F \|\X^{j} - \X \|_F \\
\beta \|\X^{j+1} - \X \|^2_F &\leq 2 \sqrt{1+\delta_{3r} }  \| \X^{j+1} - \X \|_F\|\z\|_2 \\
\gamma \|\X^{j+1} - \X \|^2_F &\leq (1+\epsilon)^2 2^{-n^\alpha}.
\end{align*}
Dividing the first two inequalities by $\| \X^{j+1} - \X\|_F$, square rooting both sides of the last inequality, and taking the sum of all three inequalities results in:
\begin{equation}
\|\X^{j+1} - \X \|_F \leq f(\beta) \left( 2 \| \Id - {\A_\mathcal{Q}^j}^*\A_\mathcal{Q}^j \|_{2 \rightarrow 2} \|\X^{j} - \X \|_F + 2 \sqrt{1+\delta_{3r} }\|\z \|_2 + (1+\epsilon) 2^{-0.5n^\alpha}\right),
\end{equation}
where $f(\beta) = \left(1 - \beta + \sqrt{\beta} \right)$. Noting that $0 \leq f(\beta) \leq 1$ for $\beta \in [0,1]$, we can drop the $f(\beta)$ term proceeding inequalities. With the bound from the proof of \cite{rauhut2017low}[Theorem 1], we have that $\| \Id - {\A_\mathcal{Q}^j}^*\A_\mathcal{Q}^j \|_{2 \rightarrow 2} \leq \delta_{3r}$ leading to the inequality:
\begin{equation*}
\|\X^{j+1} - \X \|_F \leq 2 \delta_{3r} \|\X^{j} - \X \|_F + 2 \sqrt{1+\delta_{3r}}\|\z \|_2 + (1+\epsilon) 2^{-0.5n^\alpha}.
\end{equation*}
Finally, iterating the upper bound leads to the desired result:
\begin{equation*}
\|\X^{j+1} - \X \|_F \leq (2 \delta_{3r})^j \|\X^{0} - \X \|_F + \frac{2 \sqrt{1+\delta_{3r}}}{1 - 2 \delta_{3r}} \|\z \|_2+ \frac{(1+\epsilon) 2^{-0.5n^\alpha}}{1 - 2 \delta_{3r}}.
\end{equation*}
\end{proof}

Note that the TIHT method and main theorem have been modified in two crucial ways. First, the ``thresholding'' operator $\H_r$ has been replaced by the output guaranteed by Theorem \ref{woodruff}. This allows us to obtain an efficient thresholding step at the price of outputting a tensor of slightly higher rank. This higher rank output in turn requires a stricter assumption on the operator $\A$, namely that the TRIP is satisfied with that higher rank. However, we typically assume $d$ is small and bounded, so the increase is not severe. That leads to the second modification, which is that the TRIP is defined for low CP-rank matrices rather than other types of low rankness. Thus, what remains to be proved is for what types of measurement operators $\A$ this TRIP holds.

\subsection{Measurement maps satisfying our TRIP}\label{sec:maintrip}

The following theorem shows that Gaussian measurement maps satisfy the desired TRIP with high probability.

\begin{theorem}\label{coverthm}
Let $\A : \R^{n_1\times n_2\times\ldots \times n_d} \rightarrow \R^m$ be represented by a tensor in $\R^{n_1\times n_2\times\ldots \times n_d\times m}$ whose entries are properly normalized, i.i.d. Gaussian random variables.\footnote{This can easily be extended to mean zero $L$-subgaussian random variables, as in \cite{rauhut2017low}, where the constant $C$ in \eqref{eq:mtrip} depends on the subgaussian parameter $L$.}
Then $\A$ satisfies the TRIP as in Definition \ref{cptrip} with parameter $\delta_r$ as long as
\begin{equation}
m \geq C\delta^{-2}\cdot \max\left\{\log(\varepsilon^{-1}), ~r\log(drR^d)\sum_{i=1}^d n_i \right\}.
\label{eq:mtrip}
\end{equation}
\end{theorem}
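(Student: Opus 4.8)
The plan is to establish the TRIP as a uniform concentration statement via a covering-number (metric-entropy) argument, exactly as in the Tucker/HOSVD/TT/HT analysis of \cite{rauhut2017low}; the only genuinely new ingredient is an entropy estimate for the CP-rank set $S_{r,R}$. The target is to show that with high probability $\sup_{\X\in S_{r,R}}\big|\|\A(\X)\|_2^2-1\big|\le\delta$, since every element of $S_{r,R}$ is normalized. Because $\A$ has properly normalized i.i.d.\ Gaussian entries, $\|\A(\X)\|_2^2$ is for each fixed $\X$ a sum of $m$ i.i.d.\ squared Gaussians and hence concentrates around $\|\X\|_F^2$ with a subexponential tail of the form $\exp(-cm\delta^2)$ for $\delta\in(0,1)$. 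The whole difficulty is making this bound uniform over the infinite set $S_{r,R}$.

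First I would reduce the theorem to a single covering-number bound by invoking the general subgaussian RIP result underlying \cite{rauhut2017low} (in the spirit of the chaos-process and generic-chaining bounds of Krahmer--Mendelson--Rauhut and Dirksen): for a set $S$ of unit-Frobenius-norm tensors, a properly normalized subgaussian map satisfies $\sup_{\X\in S}|\|\A(\X)\|_2^2-1|\le\delta$ with probability at least $1-\varepsilon$ provided $m\gtrsim\delta^{-2}\big(\log N(S,c\delta)+\log(\varepsilon^{-1})\big)$, where $N(S,\eta)$ denotes the covering number of $S$ at scale $\eta$ in Frobenius norm. This black box is attractive because it handles internally the secant set $S-S$ and the fact that differences of two rank-$r$ tensors have rank up to $2r$; reusing it means the entire proof collapses to estimating $N(S_{r,R},\eta)$.

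The core computation is therefore the entropy bound. I would parametrize each $\X\in S_{r,R}$ by its $rd$ factor vectors $\x_{ij}\in\R^{n_j}$ with $\|\x_{ij}\|_2\le R$, so that the parameter space is a product of $rd$ Euclidean balls. Taking a $\rho$-net of each ball of radius $R$ in $\R^{n_j}$ (of cardinality at most $(1+2R/\rho)^{n_j}$) and forming the product net yields a set of cardinality $(1+2R/\rho)^{r\sum_i n_i}$. A telescoping multilinearity estimate shows the factors-to-tensor map is Lipschitz with constant on the order of $rdR^{d-1}$, namely perturbing every factor by at most $\rho$ changes $\X$ in Frobenius norm by at most $rdR^{d-1}\rho$; choosing $\rho\asymp\eta/(rdR^{d-1})$ then gives an $\eta$-net of $S_{r,R}$ and the bound $\log N(S_{r,R},\eta)\lesssim r\big(\sum_i n_i\big)\log\!\big(CdrR^{d}/\eta\big)$. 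Substituting $\eta\asymp\delta$ and absorbing the $\log(1/\delta)$ into constants produces exactly the term $r\log(drR^d)\sum_{i}n_i$ appearing in \eqref{eq:mtrip}, while the $\log(\varepsilon^{-1})$ term comes from the failure probability, giving the stated lower bound on $m$.

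I expect the main obstacle to be the entropy estimate rather than the concentration: one must track the multilinear product structure of $S_{r,R}$ carefully so that the Lipschitz constant of the factor parametrization, which carries the potentially large factor $R^{d-1}$, lands inside the logarithm (as $\log(drR^d)$) rather than as a polynomial prefactor on $m$. A secondary subtlety, which the cited general theorem absorbs but which must be acknowledged, is that differences of normalized elements of $S_{r,R}$ need not be normalized and their factors can blow up after rescaling; this is why I prefer to route the argument through the chaos-process bound (which only needs covering numbers of $S_{r,R}$ itself at all scales) rather than through a naive net-then-extend argument that would require RIP on an artificially enlarged, unbounded-factor secant set.
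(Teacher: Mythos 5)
Your proposal follows essentially the same route as the paper: a covering of $S_{r,R}$ built from $\rho$-nets of the $rd$ factor balls of radius $R$, a telescoping multilinearity estimate giving a Lipschitz constant of order $rdR^{d-1}$ (hence the covering bound $\mathcal{N}(S_{r,R},\epsilon)\leq(3drR^d/\epsilon)^{r\sum_i n_i}$), followed by the black-box subgaussian concentration result from the proof of Theorem 2 of \cite{rauhut2017low} to convert $\log\mathcal{N}$ into the measurement bound. The argument is correct and matches the paper's proof in all essentials; your closing remark about the secant set is a reasonable caveat that the paper likewise delegates to the cited concentration machinery.
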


\begin{proof}

We proceed again as in \cite{rauhut2017low}, with the main change being the construction of the covering of the set $S_{r,R}$. To that end, we first obtain a bound on the covering number of this set, defined to the minimal cardinality of an $\epsilon$-net $\mathcal{X}$ such that for any point $\X\in S_{r,R}$, there is a point $\hat{\X}\in\mathcal{X}$ such that $\|\X - \hat{\X}\|_F \leq \epsilon$. We denote this covering number as $\mathcal{N}(S_{r,R},\epsilon)$.  See e.g. \cite{rogers1964packing} for more details on covering numbers.

\begin{lemma}\label{coverlem}
For any $\epsilon > 0$, the covering number of $S_{r,R}$ satisfies
$$
\mathcal{N}(S_{r,R},\epsilon) \leq \left(\frac{3drR^d}{\epsilon}\right)^{r\sum_{i=1}^d n_i}.
$$
\end{lemma}
\begin{proof}
For simplicity, we will prove the result for $d=3$ and $n_1 = n_2 = n_3 := n$, and the general case follows similarly. Denote by $B_2^n$ the unit ball in $\R^n$, namely $B_2^n = \{\x\in\R^n : \|\x\|_2 \leq 1\}$.
First, we begin by obtaining an $\epsilon_1$-net $\mathcal{X}_1$ of the ball of radius $R$, namely $RB_2^n$ ($\epsilon_1$ will be chosen later). Classical results~\cite{vershynin2010introduction}
utilizing volumetric estimates show that such an $\epsilon_1$-net exists with cardinality at most $(3R/\epsilon_1)^n$.  Create the set of tensors
$$\mathcal{X}_2 := \left\{\sum_{i=1}^r \x_{i1}'\otimes \x_{i2}'\otimes \x_{i3}' : \x_{ij}'\in\mathcal{X}_1 \right\} \subset\R^{n^3},$$
 and note that that its cardinality is at most
\begin{equation}\label{cover}
 |\mathcal{X}_2| \leq |\mathcal{X}_1|^{3r} \leq (3R/\epsilon_1)^{3nr}.
\end{equation}

Now let $\X\in S_{r,R}$ be given. Thus, $\X$ can be written as $\X = \sum_{i=1}^r \x_{i1}\otimes \x_{i2}\otimes \x_{i3}$ for $\x_{ij}\in RB_2^n$. For each $1\leq i \leq r$ and $1\leq j \leq 3$, choose $\x_{ij}'\in \mathcal{X}_1$ so that $\|\x_{ij} - \x_{ij}'\|_2 \leq \epsilon_1$, and note that $\X' := \sum_{i=1}^r \x_{i1}'\otimes \x_{i2}'\otimes \x_{i3}' \in \mathcal{X}_2$. First, we want to bound a single term $\|\x_{i1}\otimes \x_{i2}\otimes \x_{i3} - \x_{i1}'\otimes \x_{i2}'\otimes \x_{i3}'\|_F$. For notational convenience, let us drop the subscript $i$, and write $\x_{j,a}$ to denote the $a$th entry of the vector $\x_j := \x_{ij}$. In addition, let us define the gradually modified tensors $\Y = \x_1\otimes \x_2\otimes \x_3$,  $\Y' = \x_1'\otimes \x_2\otimes \x_3$, $\Y'' = \x_1'\otimes \x_2'\otimes \x_3$,  and $\Y''' = \x_1'\otimes \x_2'\otimes \x_3'$. Our goal is thus to bound $\|\Y - \Y'''\|_F$. To that end, we have
\begin{align*}
\|\Y - \Y'\|_F &=
\|\x_1\otimes \x_2\otimes \x_3 - \x_1'\otimes \x_2\otimes \x_3\|_F  \\
&= \left( \sum_{a,b,c=1}^n (\x_{1,a}\x_{2,b}\x_{3,c} - \x_{1,a}'\x_{2,b}\x_{3,c})^2\right)^{1/2}\\
&= \left( \sum_{b,c=1}^n\x_{2,b}^2\x_{3,c}^2\sum_{a=1}^n (\x_{1,a} - \x_{1,a}')^2\right)^{1/2}\\
&=  \left( \sum_{b,c=1}^n\x_{2,b}^2\x_{3,c}^2\|\x_{1} - \x_{1}'\|^2\right)^{1/2}\\
&\leq \epsilon_1\left( \sum_{b,c=1}^n\x_{2,b}^2\x_{3,c}^2\right)^{1/2}\\
&= \epsilon_1 \|\x_2\|_2 \|\x_3\|_2\\
&\leq \epsilon_1 R^2.
\end{align*}

Similarly, we bound $\|\Y' - \Y''\|_F \leq \epsilon_1 R^2$ and $\|\Y'' - \Y'''\|_F \leq \epsilon_1 R^2$. Thus,
\begin{align*}
\|\Y - \Y'''\|_F &\leq \|\Y -\Y'\|_F + \|\Y' - \Y''\|_F + \|\Y'' - \Y'''\|_F\\
&\leq 3\epsilon_1 R^2
\end{align*}

Finally,
\begin{align*}
\|\X - \X'\|_F  &= \|\sum_{i=1}^r \x_{i1}\otimes \x_{i2}\otimes\ldots\otimes \x_{id} - \sum_{i=1}^r \x_{i1}'\otimes \x_{i2}'\otimes\ldots\otimes \x_{id}'\|_F\\
&\leq \sum_{i=1}^r \|\x_{i1}\otimes \x_{i2}\otimes\ldots\otimes \x_{id} - \x_{i1}'\otimes \x_{i2}'\otimes\ldots\otimes \x_{id}'\|_F\\
&\leq 3r\epsilon_1 R^2.
\end{align*}

Thus, $\mathcal{X}_2$ is a $(3r\epsilon_1 R^2)$-net for $S_{r,R}$. Since our goal is to obtain an $\epsilon$-net, we choose $\epsilon_1 = \epsilon / (3rR^2)$. By \eqref{cover}, this yields a covering number of
$$
|\mathcal{X}_2| \leq \left(\frac{9rR^3}{\epsilon}\right)^{3nr}.
$$
Note that in the case of arbitrary $d$-order tensors, the same proof yields
$$
|\mathcal{X}_2| \leq \left(\frac{3drR^d}{\epsilon}\right)^{r\sum_{i=1}^d n_i}.
$$
\end{proof}

With Lemma \ref{coverlem} in tow, we may now use more classical results from concentration inequalities (see e.g. the proof of Theorem 2 of \cite{rauhut2017low} for a complete proof in the tensor case) that show the number of measurements for a sub-Gaussian operator to satisfy the RIP with parameter $\delta$ over a space $S$ scales like $\log\mathcal{N}(S, \epsilon)$. This completes our proof.
\end{proof}

Since Corollary \ref{woodcor} and Theorem \ref{mainthm} require the TRIP with parameter $\delta_{3r} < \frac{1}{2}2^{-n^\alpha}$, we will apply Theorem \ref{coverthm} with $R = \frac{1}{2}2^{-n^\alpha}$ (and $r$ replaced by $3r$). Theorem \ref{coverthm} then immediately yields the following. Note there is of course a tradeoff in choosing $\alpha$ small or large. Larger $\alpha$ allows for larger noise tolerance and a weaker TRIP requirement, but incurs additional runtime cost in utilizing Corollary \ref{woodcor}.

\begin{corollary}
A Gaussian measurement operator $\A$ will satisfy the TRIP assumptions of Theorem \ref{mainthm} and Corollary \ref{woodcor} so long as
$$
m \geq C'\delta^{-2}\cdot \max\left(\log(\varepsilon^{-1}, r\log(dr2^{-dn^\alpha})\sum_{i=1}^d n_i \right).
$$
\end{corollary}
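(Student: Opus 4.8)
The plan is to obtain the final Corollary as a direct specialization of Theorem~\ref{coverthm}, feeding in the precise parameter values demanded by Theorem~\ref{mainthm} and Corollary~\ref{woodcor}. First I would record exactly which instance of the TRIP is needed: both of those results assume the TRIP of Definition~\ref{cptrip} with parameter $\delta_{3r}\le\frac12 2^{-n^\alpha}$. The index $3r$ (rather than $r$) is essential and comes from the analysis in the proof of Theorem~\ref{mainthm}, where the isometry bound is applied to differences such as $\X^{j+1}-\X$ and to the projection $\A_\mathcal{Q}^j$ onto the span of $\X^{j+1},\X^j,\X$, whose CP-ranks add to at most $3r$. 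So the target is the TRIP adapted to the rank-$3r$ set with an appropriate factor radius $R$.

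Second, I would simply invoke Theorem~\ref{coverthm} (equivalently Lemma~\ref{coverlem} followed by the standard concentration argument) with $r$ replaced by $3r$ and the radius $R$ dictated by the hypotheses, which guarantees that a properly normalized Gaussian map satisfies the required TRIP whenever
\begin{equation*}
m \geq C\delta^{-2}\cdot \max\left\{\log(\varepsilon^{-1}),\; 3r\log\big(3d\,r\,R^{d}\big)\sum_{i=1}^d n_i\right\}.
\end{equation*}
Third, I would collect the harmless multiplicative constants — the factor $3$ in front of $r$ and the $3^d$-type factors sitting inside the logarithm — into a single constant $C'$, which reduces the second argument of the $\max$ to the stated form $r\log\big(dr\,R^{d}\big)\sum_{i=1}^d n_i$. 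Substituting the value of $R$ fixed by the surrounding discussion then produces the exponent $2^{-dn^\alpha}$ appearing inside the logarithm in the Corollary.

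The one genuinely delicate point — and the step I expect to be the main obstacle — is the bookkeeping around $R$. Since the TRIP is positively homogeneous of degree two in $\|\cdot\|_F$, it suffices to verify it on unit-norm tensors, which is exactly how $S_{r,R}$ is normalized; but one must confirm that every tensor the iteration actually hands to $\A$ (the normalizations of $\X$, $\X^j$, and their rank-$\le 2r$ differences) really lies in $S_{3r,R}$ for the value of $R$ plugged into the covering bound, and that this $R$ is mutually consistent with the unit-norm constraint $\prod_j\|\x_{ij}\|_2=1$ and with the factor bound $2^{n^\alpha}$ of the recovered tensors in $S_{r,2^{n^\alpha}}$. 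Pinning down the correct radius is the only non-mechanical part; once it is fixed, the rest is a routine substitution into Theorem~\ref{coverthm} and an absorption of constants into $C'$.
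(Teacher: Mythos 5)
Your overall route is exactly the paper's: the paper proves this corollary in one line by invoking Theorem~\ref{coverthm} with $r$ replaced by $3r$ and a specific choice of the radius $R$, then absorbing constants into $C'$. However, the step you flag as ``the only non-mechanical part'' --- pinning down $R$ --- is precisely the step you leave unresolved, and it is also where the paper itself does something questionable. The paper substitutes $R=\tfrac{1}{2}2^{-n^\alpha}$, i.e.\ the value of the TRIP \emph{constant} $\delta_{3r}$ required by Theorem~\ref{mainthm}, into the slot of the \emph{factor-norm radius} $R$ of $S_{r,R}$; this is what produces the exponent $2^{-dn^\alpha}$ inside the logarithm of the stated bound. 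Your own reasoning points the other way: the tensors the iteration actually hands to $\A$ live in $S_{3r,R}$ with $R$ governed by the factor bound $2^{n^\alpha}$ from $\X\in S_{r,2^{n^\alpha}}$ and from the iterates guaranteed by Corollary~\ref{woodcor}, which would give $\log\bigl(dr\,2^{+dn^\alpha}\bigr)$ rather than $\log\bigl(dr\,2^{-dn^\alpha}\bigr)$. Note also that $\log\bigl(dr\,2^{-dn^\alpha}\bigr)$ is negative for large $n$, rendering the stated measurement bound vacuous as written, which is further evidence that the two roles have been conflated. So: your proposal matches the paper's (one-line) argument in structure, but to actually close it you must commit to a value of $R$ consistent with the set on which the TRIP is invoked in the proof of Theorem~\ref{mainthm}, and doing so honestly yields $2^{+dn^\alpha}$ inside the logarithm, not the $2^{-dn^\alpha}$ appearing in the statement.
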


\section{Numerical Results}\label{sec:exps}
Here we present some synthetic and real experiments that showcase the performance of TIHT when applied to low CP-rank tensors. All experiments are done on order-3 tensors, so $d=3$. We utilize Gaussian measurements as motivated by Theorem \ref{coverthm}. To be precise, for a tensor $\X\in\R^{n_1\times n_2\times n_3}$, we construct for specified number of measurements $m$ a matrix $\A\in\R^{n_1 n_2 n_3\times m}$ whose entries are i.i.d. Gaussian with mean zero and variance $1/m$.  We then compute the measurements $\y = \A\X$ by applying the matrix $\A$ to the vectorized tensor $\X$. Note there are of course other natural ways of applying such Gaussian operators and we do not seek optimality in terms of computation here. In addition, the ``thresholding'' step $\mathcal{H}_r$ of \eqref{eq:cpTIHT_HT} is performed using the \texttt{cpd} function from the Tensorlab package \cite{vervliet2016tensorlab}, which we note may be implemented differently than the method guaranteed by Theorem \ref{woodruff}.  We measure the relative recovery error as $\|\X-{\X^j}\|_F / \|\X\|_F$ for a given tensor $\X$. We refer to the measurement rate as the percentage of the total number of pixels; a rate of $C\%$ corresponds to number of measurements $m = \frac{C}{100}n_1 n_2 n_3$. The rank reported is the rank used in the TIHT method \eqref{eq:cpTIHT_HT}. We present recovery results for this model here.

\subsection{Synthetic data}
For our first set of experiments, we create synthetic low-rank tensors and test the performance of TIHT with varying parameters $r$, $m$ and $\|\z\|_2$, corresponding to the tensor rank, number of measurements, and the noise level, respectively. All tensors are created with i.i.d. standard normal entries, and the noise is also Gaussian, normalized as described. Figure \ref{fig:synth} displays results showcasing the effect of varying the number of measurements  as well as running TIHT on tensors of varying ranks. The left and center plots of this figure show that as expected, lower numbers of measurements lead to slower convergence rates in both the noiseless and noisy case. In the noisy case (center), we also see different levels of the so-called ``convergence horizon.'' We see the same effect in the right plot, showing that lower rank tensors exhibit faster convergence, again as expected. We repeat these experiments but for the tensor completion setting, where instead of taking Gaussian measurements we simply observe randomly chosen entries of the tensor. The results are displayed in Figure \ref{fig:synth2}, where we observe similar behavior.

\begin{figure}[h!]
\includegraphics[width=2.1in]{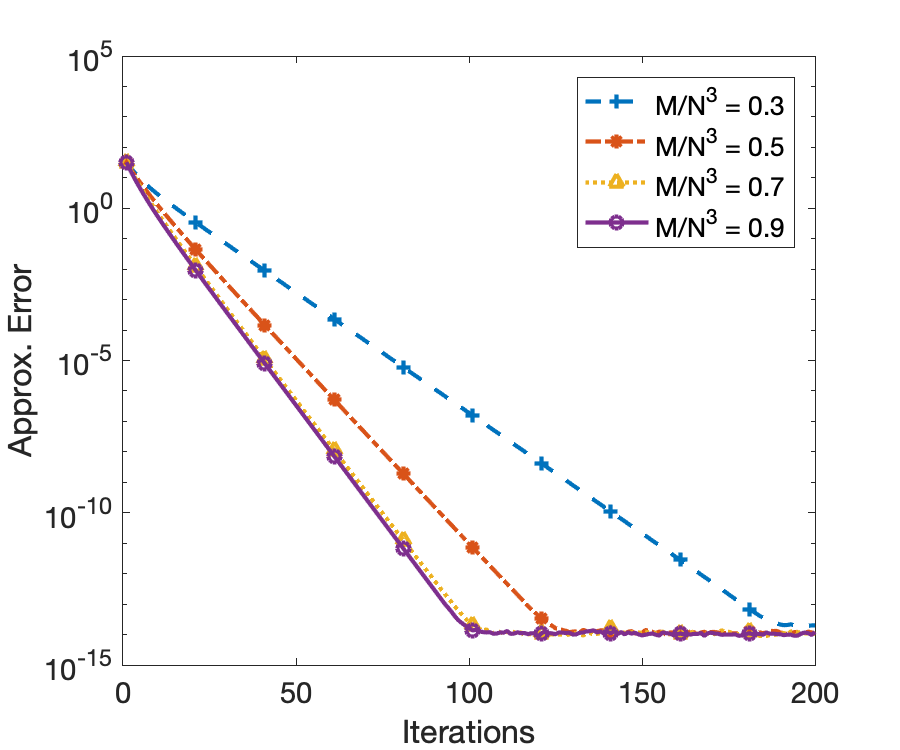}$\;$\includegraphics[width=2.1in]{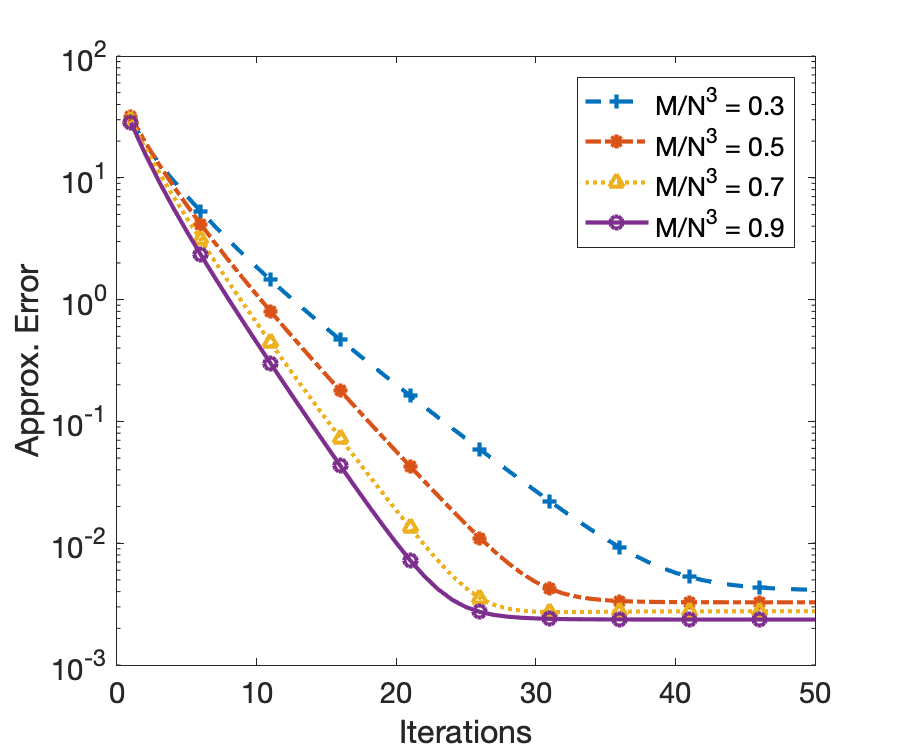}$\;$\includegraphics[width=2.1in]{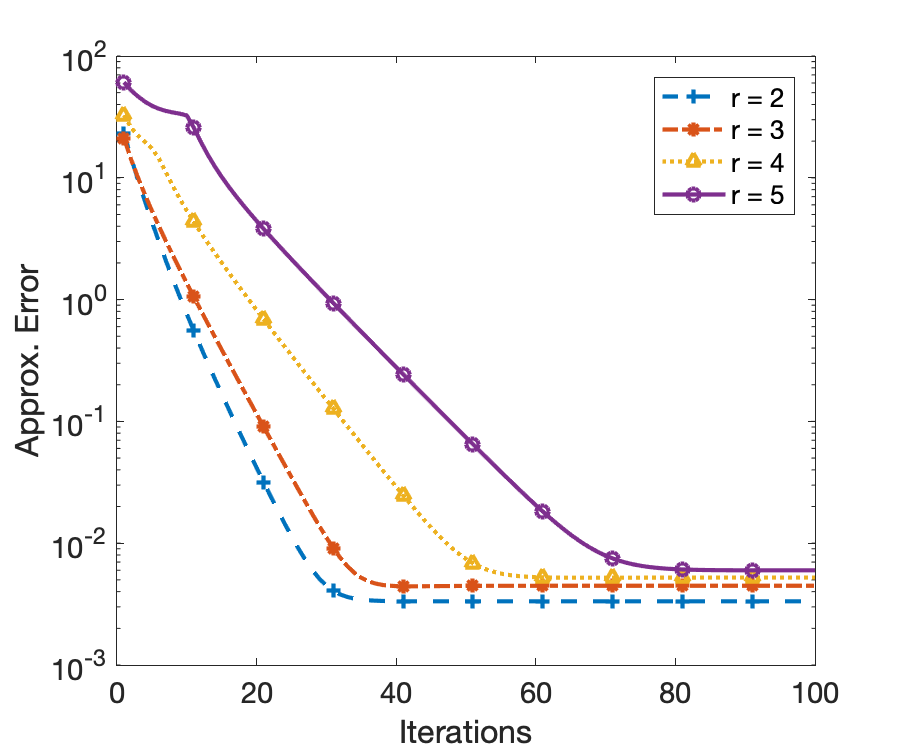}
\caption{Gaussian measurements. Relative recovery error as a function of iteration. Left: Various measurement rates with $r=2$, $n_1=n_2=n_3=10$, no noise $\z=0$. Center: Various measurement rates with $r=2$, $n_1=n_2=n_3=10$,  noise level $\|\z\|_2=0.01$. Right: Various ranks $r$ with $n_1=n_2=n_3=10$,  noise level $\|\z\|_2=0.01$.}\label{fig:synth}
\end{figure}

\begin{figure}[h!]
\includegraphics[width=2.1in]{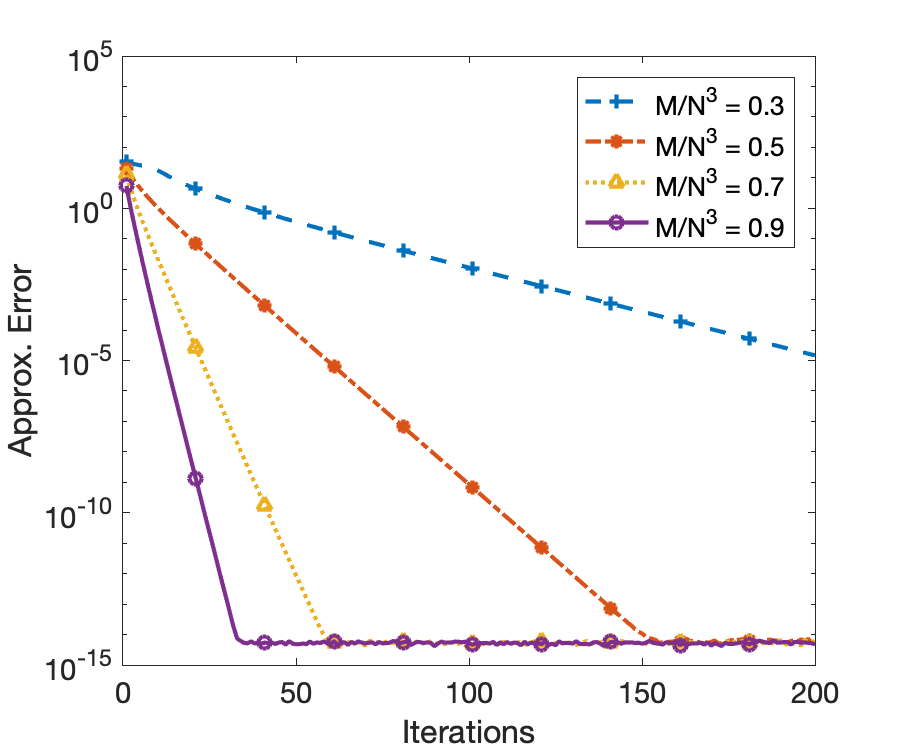}$\;$\includegraphics[width=2.1in]{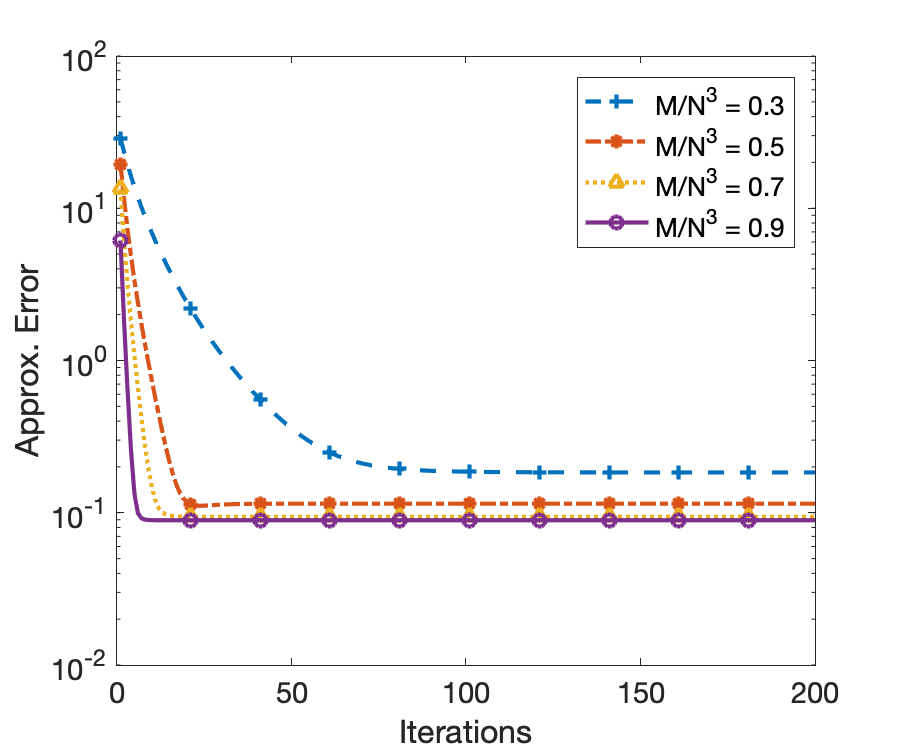}$\;$\includegraphics[width=2.1in]{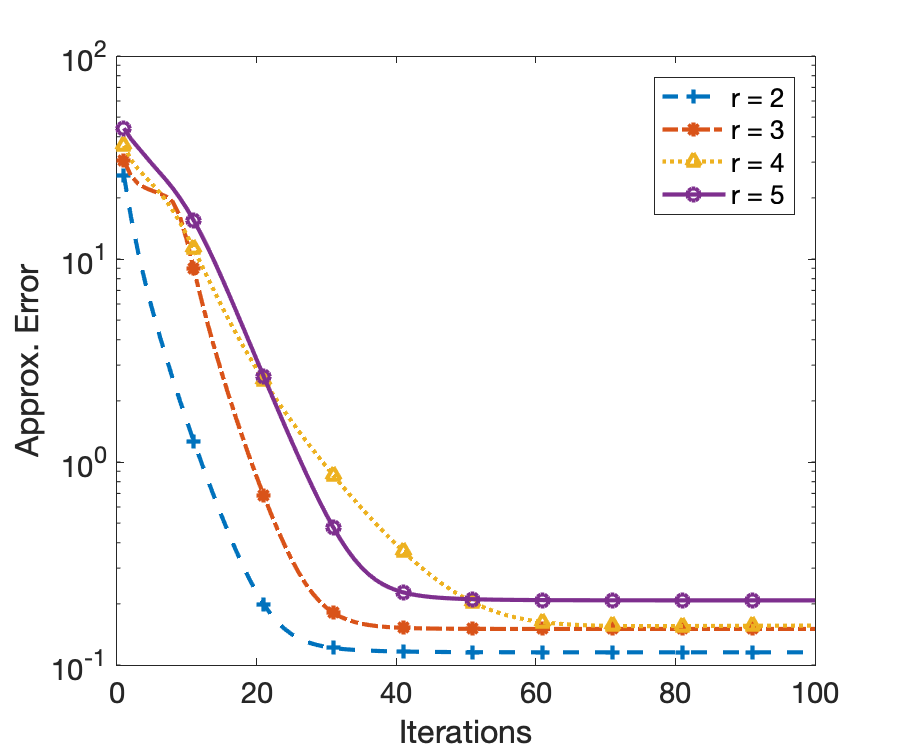}
\caption{Tensor completion. Relative recovery error as a function of iteration. Left: Various measurement rates with $r=2$, $n_1=n_2=n_3=10$, no noise $\z=0$. Center: Various measurement rates with $r=2$, $n_1=n_2=n_3=10$,  noise level $\|\z\|_2=0.01$. Right: Various ranks $r$ with $n_1=n_2=n_3=10$,  noise level $\|\z\|_2=0.01$.}\label{fig:synth2}
\end{figure}

\subsection{Real data}
There is an abundance of real applications that involve tensors. Here, we consider only two, that are appealing for presentation reasons, and for the sole purpose of verifying our theoretical findings. 
Namely, we will consider tensors arising from color images and from grayscale video. For $n_1\times n_2$ color (RGB) images, we view the image as a tensor in $\R^{n_1\times n_2\times 3}$ where the third mode represents the three color channels. Similarly, we use a $n_1\times n_2\times n_3$ tensor representation for a video with $n_3$ frames, each of size $n_1\times n_2$. These experiments are meant to showcase that TIHT can indeed be reasonably applied to such real applications.

Figure \ref{fig:kopen} shows the relative recovery error as a function of TIHT iterations for the RGB images shown, which are already low-rank (by construction).  Figure \ref{fig:truekopen} shows the same result for a real image which is not made to be low-rank. We compute its approximate distance to its nearest rank-15 tensor using the TensorLab \texttt{cpd} function to be $0.1$. Note that we use small image sizes for sake of computation (the image is actually a patch out of a larger image), which causes the lack of sharpness in the visualization of the original images as displayed. We see in Figure~\ref{fig:truekopen} that the relative error reaches around the optimal, namely the relative distance to its low-rank representation. Thus, this error can be viewed as reaching the noise floor. Figure \ref{fig:candle} shows the same information for the videos whose frames are displayed, and we see again that the error decays until approximately the noise floor. Note that for computational reasons we consider only small image sizes, since the measurement maps themselves grow quite large quickly. Of course, other types of maps may reduce this problem, although that is not the focus of this paper.

\begin{figure}[h!]
\includegraphics[width=2.1in]{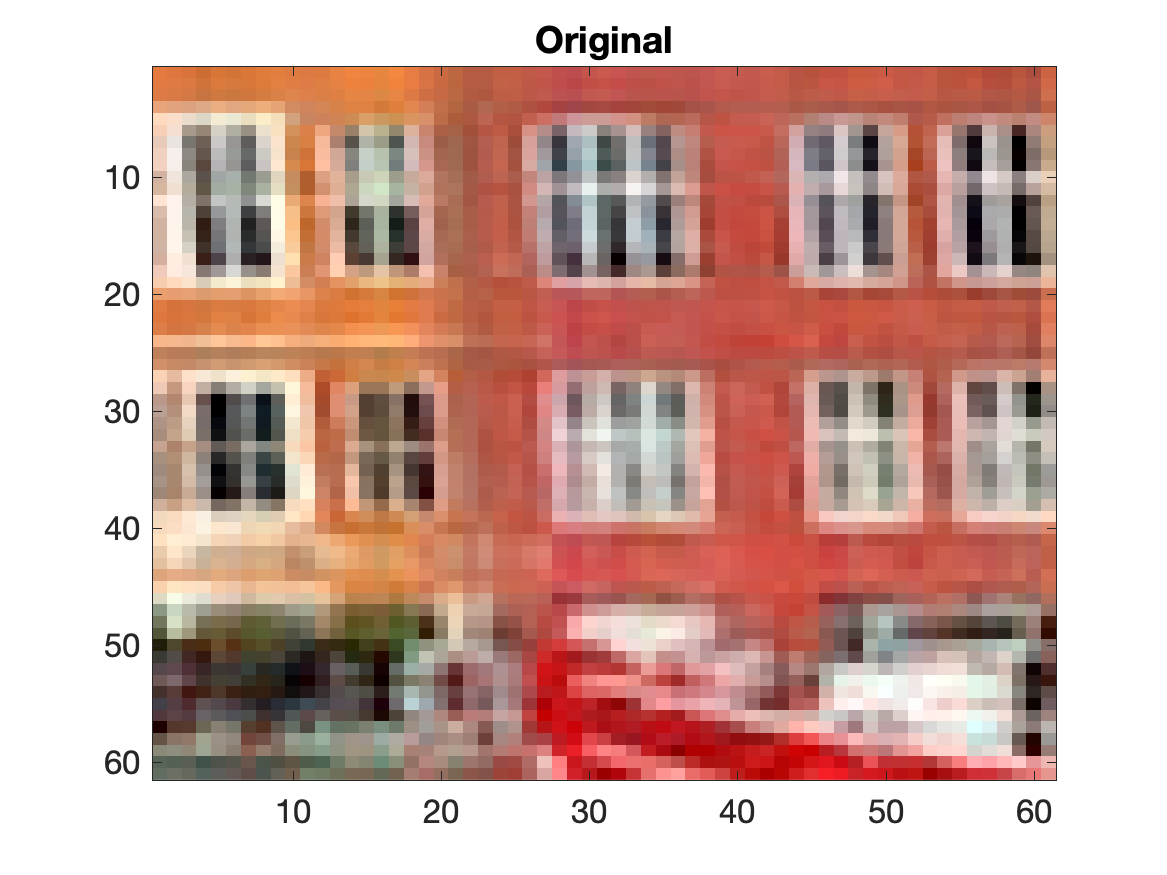}$\;$\includegraphics[width=2.1in]{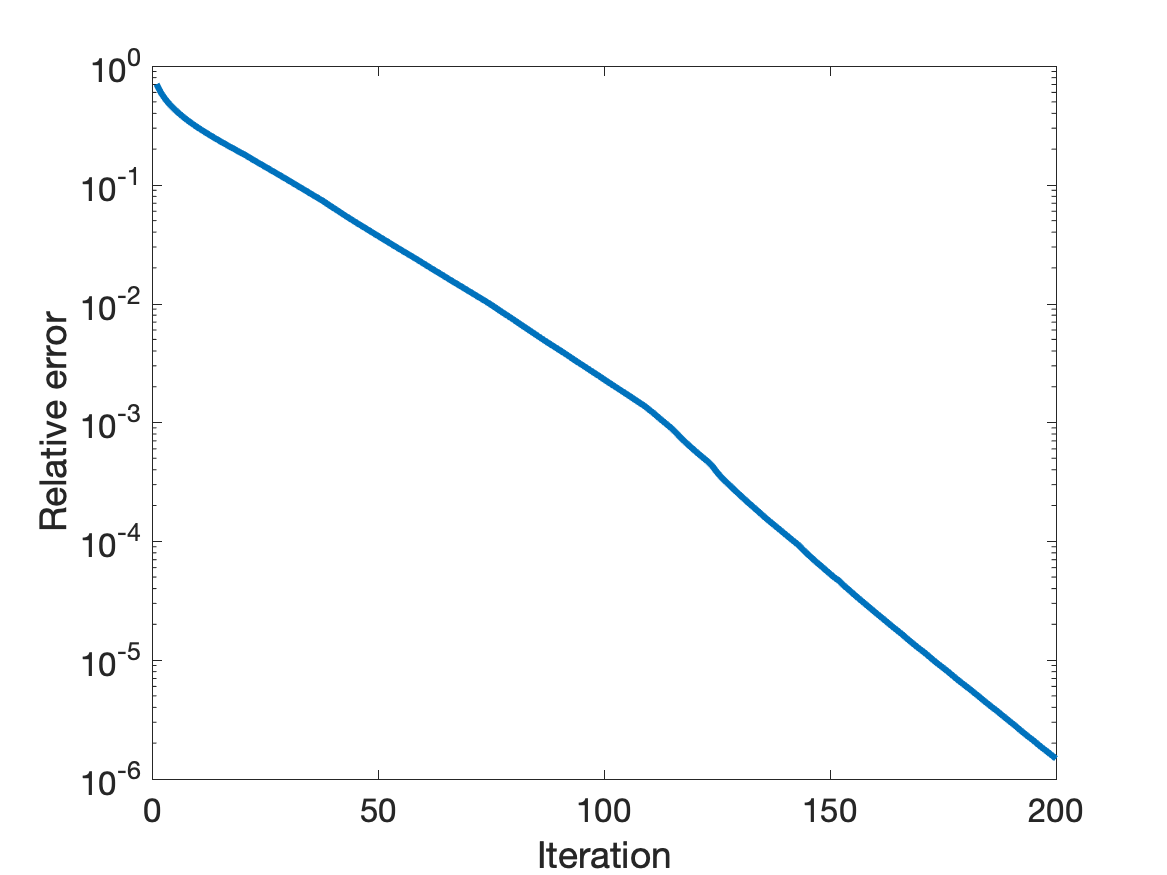}$\;$\includegraphics[width=2.1in]{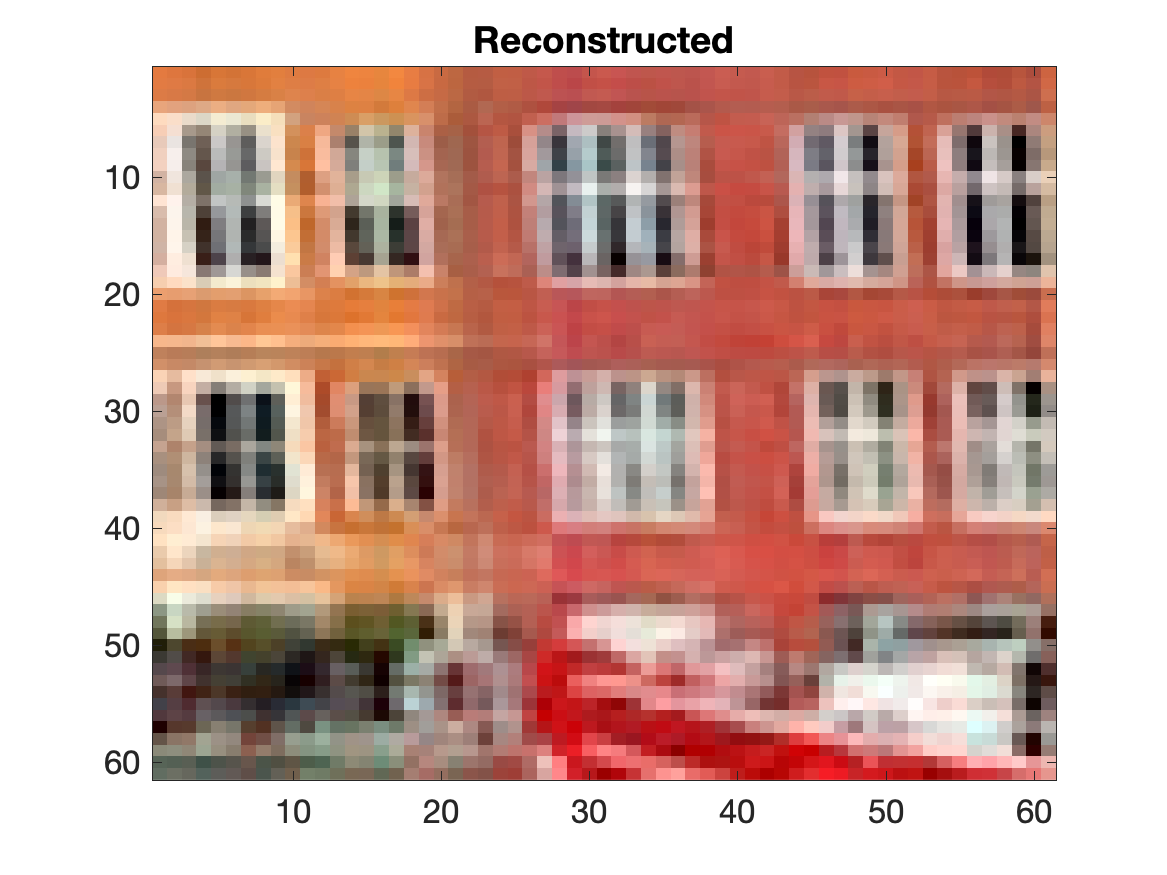}
\caption{Color image of size $60\times 60\times 3$, $60\%$ measurement rate, rank $r=15$. Left: Low rank original image. Center: Relative reconstruction error per iteration. Right: Reconstructed image after $200$ iterations. }\label{fig:kopen}
\end{figure}

\begin{figure}[h!]
\includegraphics[width=2.1in]{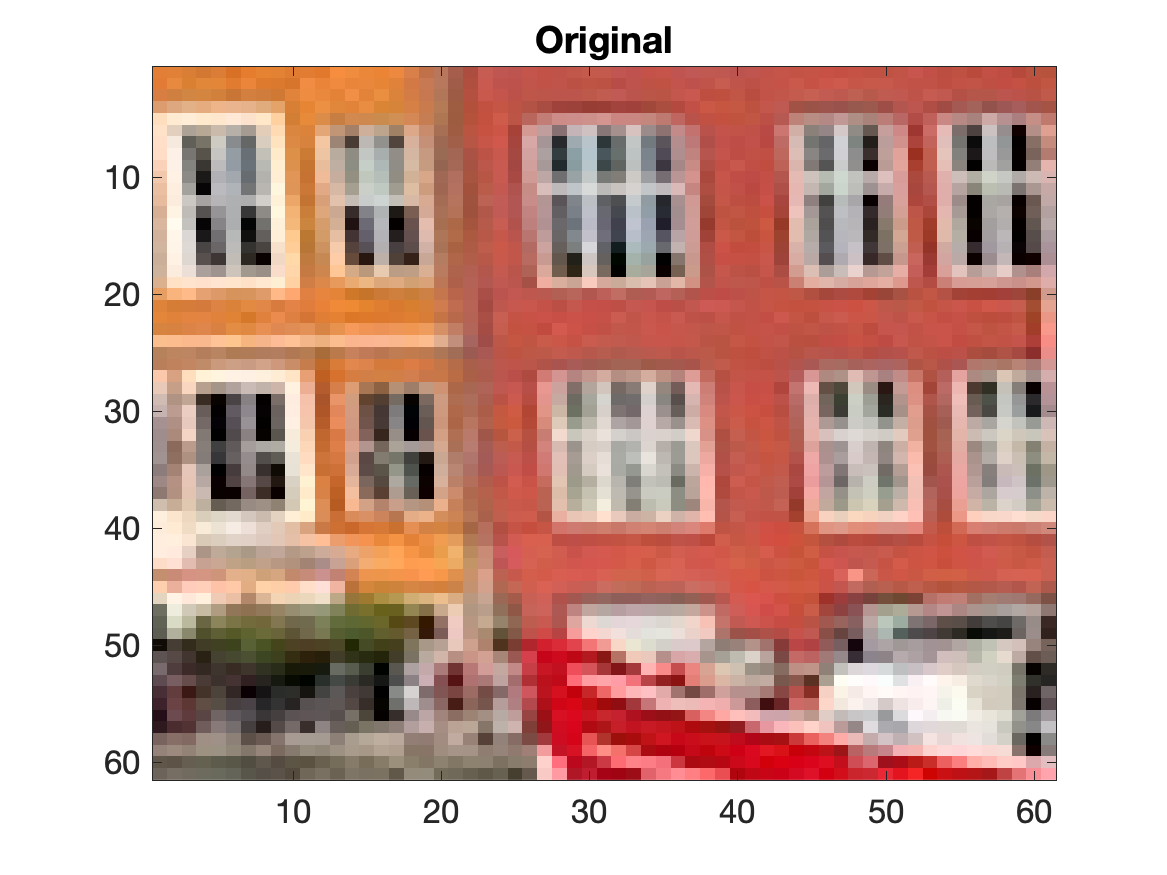}$\;$\includegraphics[width=2.1in]{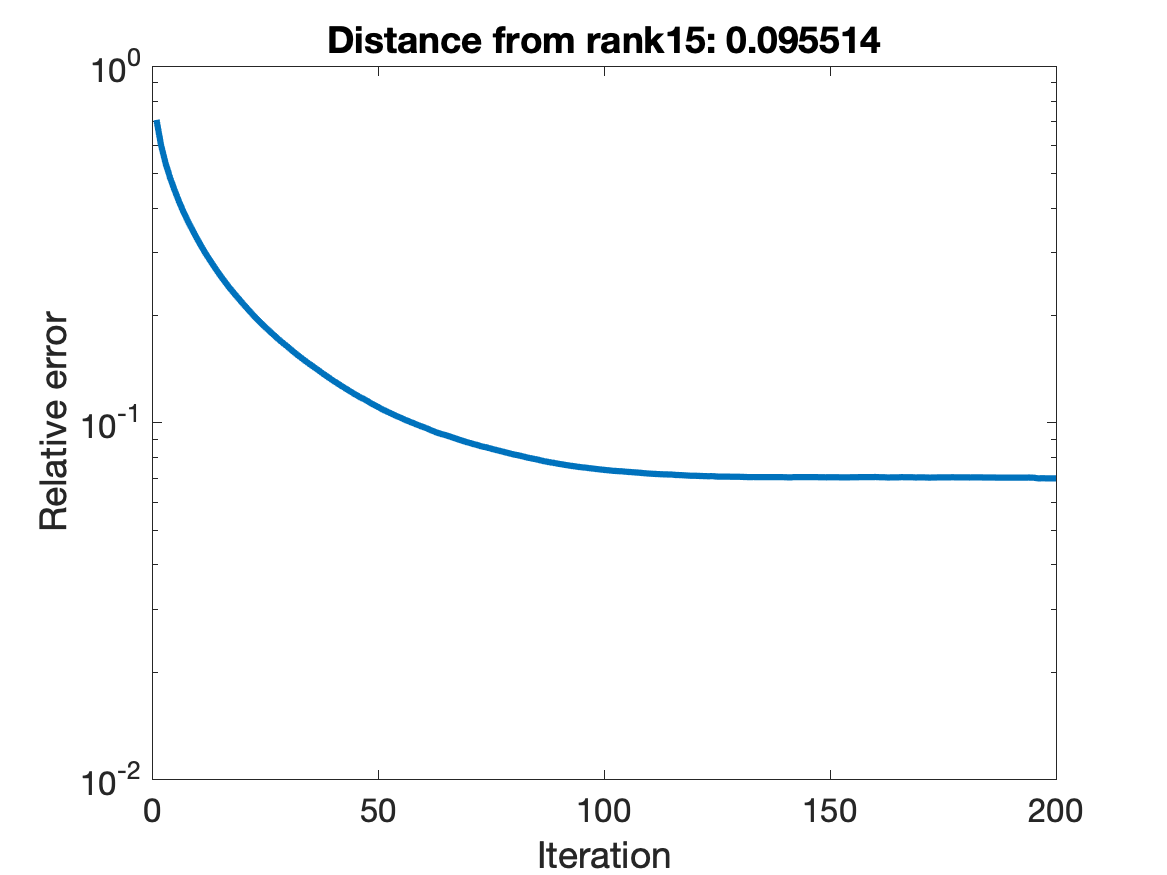}$\;$\includegraphics[width=2.1in]{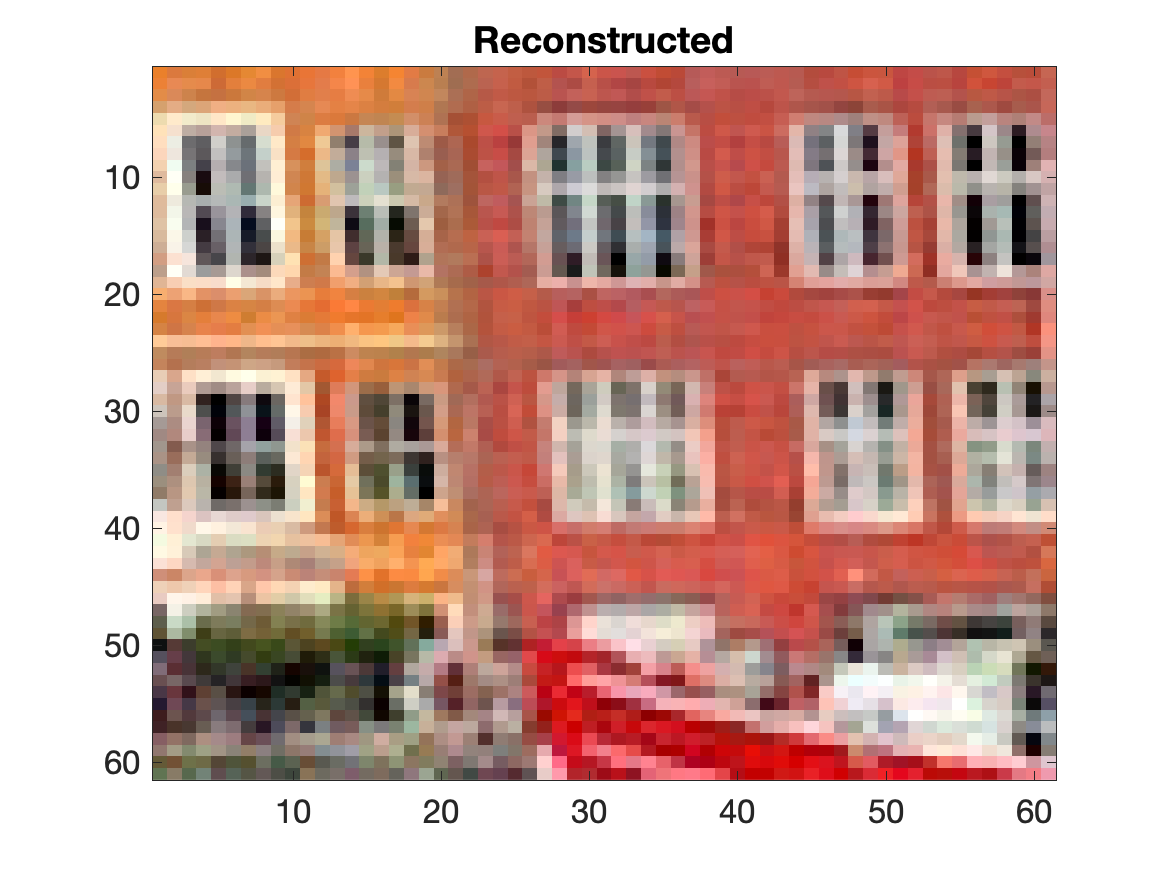}
\caption{Color image of size $60\times 60\times 3$, $60\%$ measurement rate. The true image has relative error $0.09$ to a rank-15 image, and rank $r=15$ is used in the algorithm. Left: Original image. Center: Relative reconstruction error per iteration. Right: Reconstructed image after $200$ iterations.}\label{fig:truekopen}
\end{figure}

\begin{figure}[h!]
\includegraphics[width=2.1in]{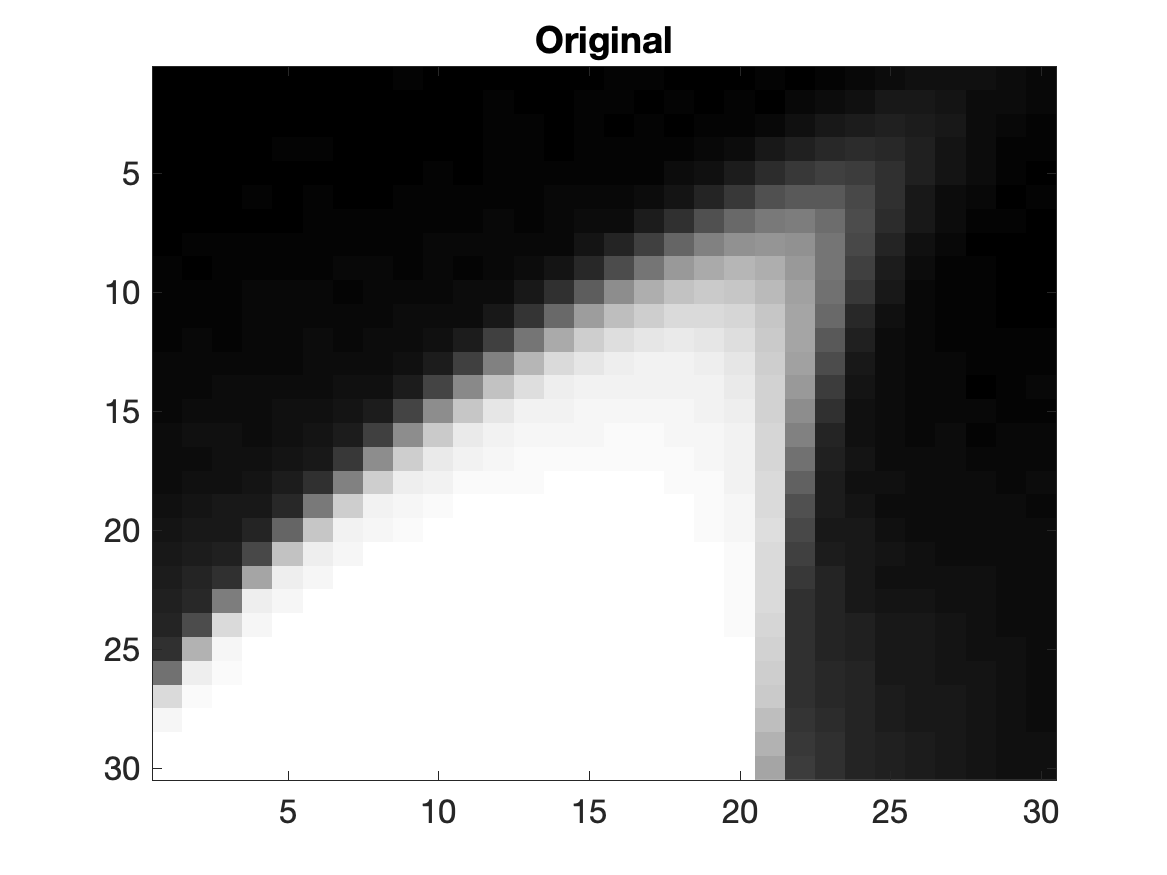}$\;$\includegraphics[width=2.1in]{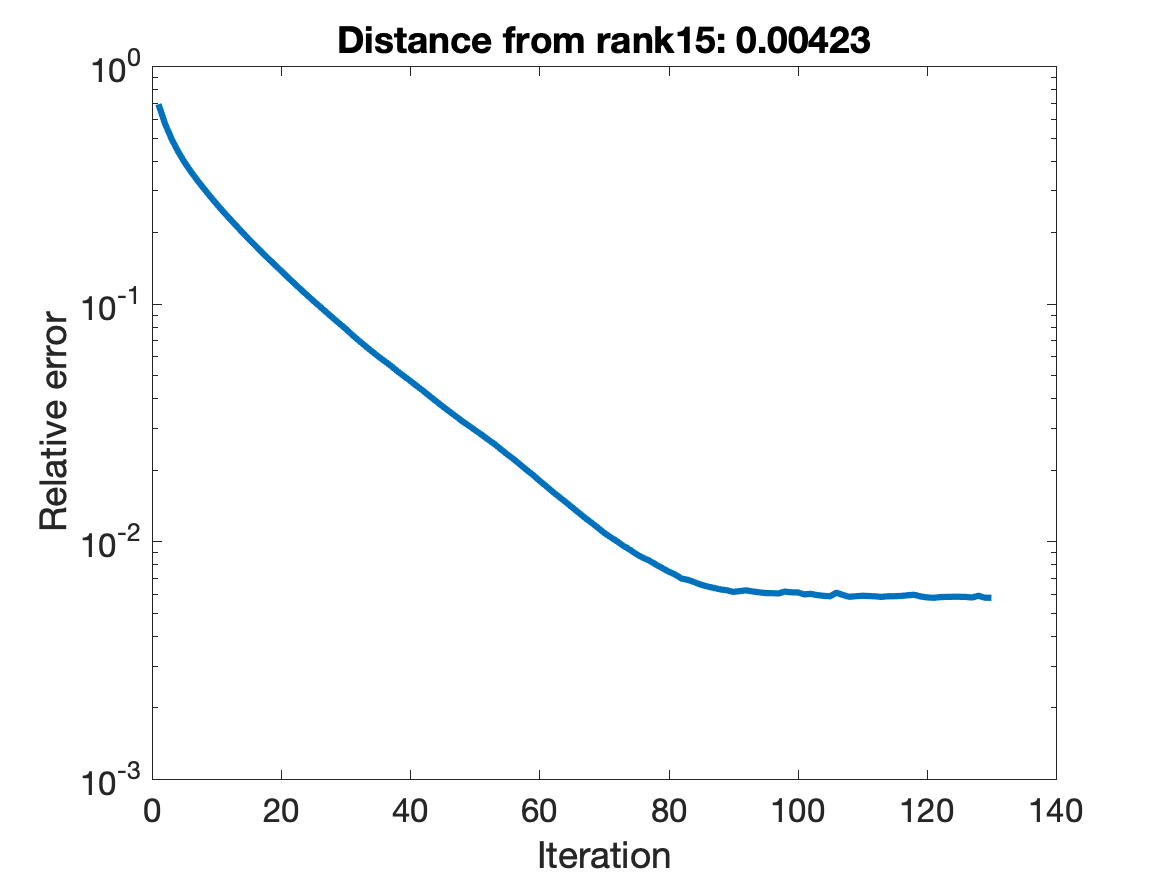}$\;$\includegraphics[width=2.1in]{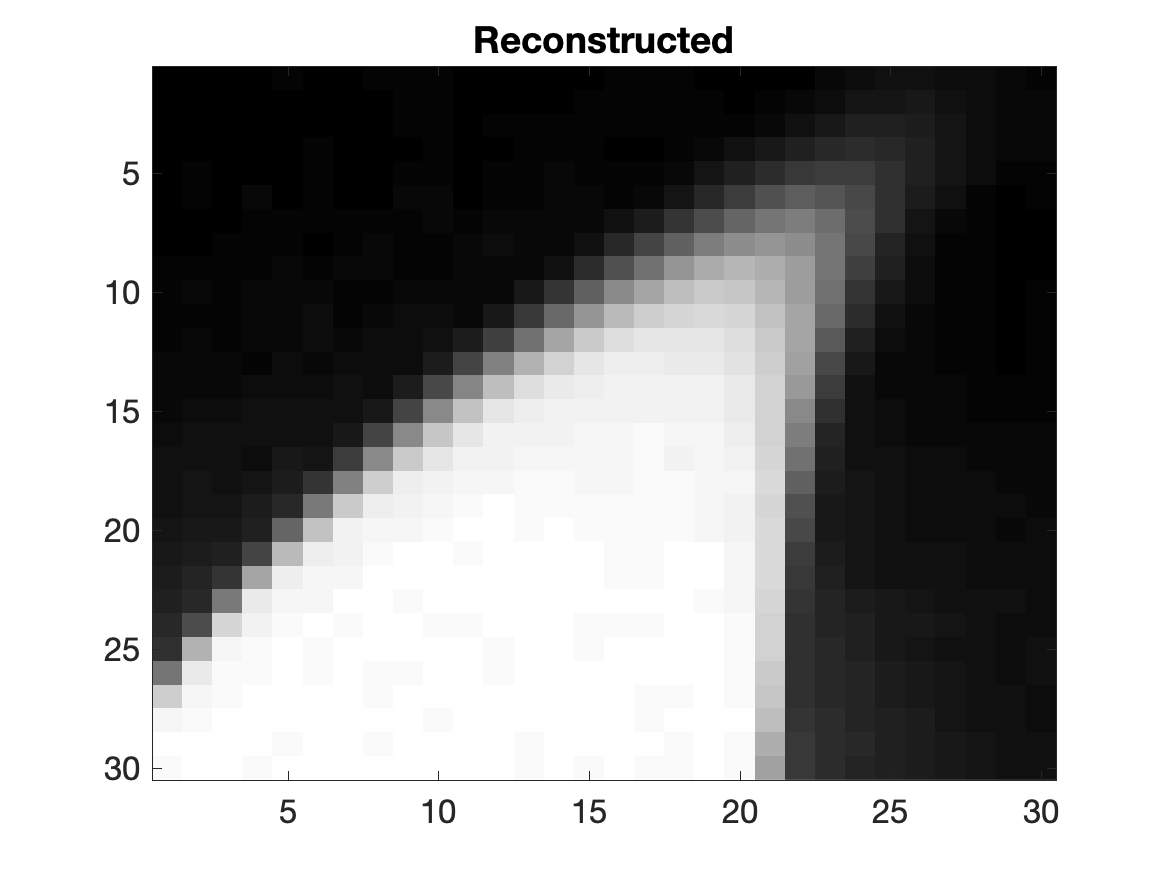}
\caption{Candle video of size $30\times 30\times 10$, $80\%$ measurement rate, rank $r=15$. Left: Original image corresponding to the first frame. Center: Relative reconstruction error per iteration. Right: Reconstructed first frame after $130$ iterations. }\label{fig:candle}
\end{figure}

\section{Conclusion}\label{sec:conclude}
This work presents an Iterative Hard Thresholding approach to low CP-rank tensor approximation from linear measurements. We show that the proposed algorithm converges to a low-rank tensor when a linear operator satisfies an RIP-type condition for low rank tensors (TRIP). In addition, we prove that Gaussian measurements satisfy the TRIP condition for low CP-rank signals. Our numerical experiments not only verify our theoretical findings but also highlight the potential of the proposed method. Future directions for this work include extensions to stochastic iterative hard thresholding~\cite{nguyen2017linear} and utilizing different types of tensor measurement maps~\cite{georgieva2019greedy}.

\section*{Acknowledgements}
This material is based upon work supported by the National Security Agency under Grant No. H98230-19-1-0119, The Lyda Hill Foundation, The McGovern Foundation, and Microsoft Research, while the authors were in residence at the Mathematical Sciences Research Institute in Berkeley, California, during the summer of 2019. In addition, Needell was funded by NSF CAREER DMS $\#1348721$ and NSF BIGDATA DMS $\#1740325$. Li was supported by the NSF grants CCF-1409258, CCF-1704204, and the DARPA Lagrange Program under ONR/SPAWAR contract N660011824020. Qin is supported by the NSF grant DMS-1941197.

\bibliographystyle{ieeetr}
\bibliography{overall}

\begin{thebibliography}{10}

\bibitem{RefWorks:45}
S.~Foucart and H.~Rauhut, ``A mathematical introduction to compressive
  sensing,'' {\em Applied and Numerical Harmonic Analysis}, 2013.

\bibitem{RefWorks:373}
Y.~C. Eldar and G.~Kutyniok, {\em Compressed sensing: theory and applications}.
\newblock Cambridge University Press, 2012.

\bibitem{ahmed2014compressive}
A.~Ahmed and J.~Romberg, ``Compressive multiplexing of correlated signals,''
  {\em IEEE Transactions on Information Theory}, vol.~61, no.~1, pp.~479--498,
  2014.

\bibitem{zhang2013hyperspectral}
H.~Zhang, W.~He, L.~Zhang, H.~Shen, and Q.~Yuan, ``Hyperspectral image
  restoration using low-rank matrix recovery,'' {\em IEEE Transactions on
  Geoscience and Remote Sensing}, vol.~52, no.~8, pp.~4729--4743, 2013.

\bibitem{gross2010quantum}
D.~Gross, Y.-K. Liu, S.~T. Flammia, S.~Becker, and J.~Eisert, ``Quantum state
  tomography via compressed sensing,'' {\em Physical review letters}, vol.~105,
  no.~15, p.~150401, 2010.

\bibitem{candes2011robust}
E.~J. Cand{\`e}s, X.~Li, Y.~Ma, and J.~Wright, ``Robust principal component
  analysis?,'' {\em Journal of the ACM (JACM)}, vol.~58, no.~3, p.~11, 2011.

\bibitem{basri2003lambertian}
R.~Basri and D.~W. Jacobs, ``Lambertian reflectance and linear subspaces,''
  {\em IEEE Transactions on Pattern Analysis \& Machine Intelligence}, no.~2,
  pp.~218--233, 2003.

\bibitem{candes2009exact}
E.~J. Cand{\`e}s and B.~Recht, ``Exact matrix completion via convex
  optimization,'' {\em Foundations of Computational mathematics}, vol.~9,
  no.~6, pp.~717--772, 2009.

\bibitem{eldar2012uniqueness}
Y.~Eldar, D.~Needell, and Y.~Plan, ``Uniqueness conditions for low-rank matrix
  recovery,'' {\em Applied and Computational Harmonic Analysis}, vol.~33,
  no.~2, pp.~309--314, 2012.

\bibitem{recht2010guaranteed}
B.~Recht, M.~Fazel, and P.~A. Parrilo, ``Guaranteed minimum-rank solutions of
  linear matrix equations via nuclear norm minimization,'' {\em SIAM review},
  vol.~52, no.~3, pp.~471--501, 2010.

\bibitem{candes2010tight}
E.~Candes and Y.~Plan, ``Tight oracle bounds for low-rank matrix recovery from
  a mininal number of noisy random measurements,'' {\em IEEE Trans. Inform.
  Theory}, vol.~57, no.~4, pp.~2342--2359, 2011.

\bibitem{PaperIHT}
T.~Blumensath and M.~E. Davies, ``Iterative hard thresholding for compressed
  sensing,'' {\em Appl. Comput. Harmon. A.}, vol.~27, no.~3, pp.~265--274,
  2009.

\bibitem{blumensath2010normalized}
T.~Blumensath and M.~E. Davies, ``Normalized iterative hard thresholding:
  Guaranteed stability and performance,'' {\em IEEE Journal of selected topics
  in signal processing}, vol.~4, no.~2, pp.~298--309, 2010.

\bibitem{tanner2013normalized}
J.~Tanner and K.~Wei, ``Normalized iterative hard thresholding for matrix
  completion,'' {\em SIAM Journal on Scientific Computing}, vol.~35, no.~5,
  pp.~S104--S125, 2013.

\bibitem{RefWorks:48}
E.~J. Cand\`es and T.~Tao, ``Decoding by linear programming,'' {\em IEEE T.
  Inform. Theory}, vol.~51, pp.~4203--4215, 2005.

\bibitem{liu2012tensor}
J.~Liu, P.~Musialski, P.~Wonka, and J.~Ye, ``Tensor completion for estimating
  missing values in visual data,'' {\em IEEE transactions on pattern analysis
  and machine intelligence}, vol.~35, no.~1, pp.~208--220, 2012.

\bibitem{bengua2017efficient}
J.~A. Bengua, H.~N. Phien, H.~D. Tuan, and M.~N. Do, ``Efficient tensor
  completion for color image and video recovery: Low-rank tensor train,'' {\em
  IEEE Transactions on Image Processing}, vol.~26, no.~5, pp.~2466--2479, 2017.

\bibitem{romera2013multilinear}
B.~Romera-Paredes, H.~Aung, N.~Bianchi-Berthouze, and M.~Pontil, ``Multilinear
  multitask learning,'' in {\em International Conference on Machine Learning},
  pp.~1444--1452, 2013.

\bibitem{vasilescu2005multilinear}
M.~A.~O. Vasilescu and D.~Terzopoulos, ``Multilinear independent components
  analysis,'' in {\em 2005 IEEE Computer Society Conference on Computer Vision
  and Pattern Recognition (CVPR'05)}, vol.~1, pp.~547--553, IEEE, 2005.

\bibitem{beck2000multiconfiguration}
M.~H. Beck, A.~J{\"a}ckle, G.~A. Worth, and H.-D. Meyer, ``The
  multiconfiguration time-dependent hartree (mctdh) method: a highly efficient
  algorithm for propagating wavepackets,'' {\em Physics reports}, vol.~324,
  no.~1, pp.~1--105, 2000.

\bibitem{lubich2008quantum}
C.~Lubich, {\em From quantum to classical molecular dynamics: reduced models
  and numerical analysis}.
\newblock European Mathematical Society, 2008.

\bibitem{tucker1966some}
L.~R. Tucker, ``Some mathematical notes on three-mode factor analysis,'' {\em
  Psychometrika}, vol.~31, no.~3, pp.~279--311, 1966.

\bibitem{de2000multilinear}
L.~De~Lathauwer, B.~De~Moor, and J.~Vandewalle, ``A multilinear singular value
  decomposition,'' {\em SIAM journal on Matrix Analysis and Applications},
  vol.~21, no.~4, pp.~1253--1278, 2000.

\bibitem{carroll1970analysis}
J.~D. Carroll and J.-J. Chang, ``Analysis of individual differences in
  multidimensional scaling via an n-way generalization of “eckart-young”
  decomposition,'' {\em Psychometrika}, vol.~35, no.~3, pp.~283--319, 1970.

\bibitem{harshman1970foundations}
R.~A. Harshman {\em et~al.}, ``Foundations of the parafac procedure: Models and
  conditions for an" explanatory" multimodal factor analysis,'' 1970.

\bibitem{kilmer2011factorization}
M.~E. Kilmer and C.~D. Martin, ``Factorization strategies for third-order
  tensors,'' {\em Linear Algebra and its Applications}, vol.~435, no.~3,
  pp.~641--658, 2011.

\bibitem{zhang2014novel}
Z.~Zhang, G.~Ely, S.~Aeron, N.~Hao, and M.~Kilmer, ``Novel methods for
  multilinear data completion and de-noising based on tensor-svd,'' in {\em
  Proceedings of the IEEE conference on computer vision and pattern
  recognition}, pp.~3842--3849, 2014.

\bibitem{kolda2009tensor}
T.~G. Kolda and B.~W. Bader, ``Tensor decompositions and applications,'' {\em
  SIAM review}, vol.~51, no.~3, pp.~455--500, 2009.

\bibitem{anandkumar2014tensor}
A.~Anandkumar, R.~Ge, D.~Hsu, S.~M. Kakade, and M.~Telgarsky, ``Tensor
  decompositions for learning latent variable models,'' {\em The Journal of
  Machine Learning Research}, vol.~15, no.~1, pp.~2773--2832, 2014.

\bibitem{hillar2013most}
C.~J. Hillar and L.-H. Lim, ``Most tensor problems are np-hard,'' {\em Journal
  of the ACM (JACM)}, vol.~60, no.~6, p.~45, 2013.

\bibitem{rauhut2017low}
H.~Rauhut, R.~Schneider, and {\v{Z}}.~Stojanac, ``Low rank tensor recovery via
  iterative hard thresholding,'' {\em Linear Algebra and its Applications},
  vol.~523, pp.~220--262, 2017.

\bibitem{song2019relative}
Z.~Song, D.~P. Woodruff, and P.~Zhong, ``Relative error tensor low rank
  approximation,'' in {\em Proceedings of the Thirtieth Annual ACM-SIAM
  Symposium on Discrete Algorithms}, pp.~2772--2789, Society for Industrial and
  Applied Mathematics, 2019.

\bibitem{bhaskara2014uniqueness}
A.~Bhaskara, M.~Charikar, and A.~Vijayaraghavan, ``Uniqueness of tensor
  decompositions with applications to polynomial identifiability,'' in {\em
  Conference on Learning Theory}, pp.~742--778, 2014.

\bibitem{rogers1964packing}
C.~A. Rogers, {\em Packing and covering}.
\newblock No.~54, University Press, 1964.

\bibitem{vershynin2010introduction}
R.~Vershynin, ``Introduction to the non-asymptotic analysis of random
  matrices,'' {\em arXiv preprint arXiv:1011.3027}, 2010.

\bibitem{vervliet2016tensorlab}
N.~Vervliet, O.~Debals, and L.~De~Lathauwer, ``Tensorlab 3.0 -- numerical
  optimization strategies for large-scale constrained and coupled matrix/tensor
  factorization,'' in {\em Asilomar Conference on Signals, Systems and
  Computers}, pp.~1733--1738, IEEE, 2016.

\bibitem{nguyen2017linear}
N.~Nguyen, D.~Needell, and T.~Woolf, ``Linear convergence of stochastic
  iterative greedy algorithms with sparse constraints,'' {\em IEEE Transactions
  on Information Theory}, vol.~63, no.~11, pp.~6869--6895, 2017.

\bibitem{georgieva2019greedy}
I.~Georgieva and C.~Hofreither, ``Greedy low-rank approximation in tucker
  format of solutions of tensor linear systems,'' {\em Journal of Computational
  and Applied Mathematics}, vol.~358, pp.~206--220, 2019.

\end{thebibliography}

\end{document}